\numberwithin{equation}{section}
\newtheorem{thm}{Theorem}[section]
\newtheorem{prop}{Proposition}[section]
\newtheorem{lem}{Lemma}[section]
\newtheorem{rema}{Remark}[section]
\newtheorem{coro}{Corollary}[section]
\newtheorem{notation}{Notation}
\newtheorem*{theorem_1}{Theorem 1}
\newtheorem*{theorem_2}{Theorem 2}
\newtheorem*{theorem_A}{Theorem A}
\newtheorem*{theorem_B}{Theorem B}
\newtheorem*{theorem_C}{Theorem C}
\newtheorem*{theorem_D}{Theorem D}
\DeclareMathOperator{\rk}{rk}
\DeclareMathOperator{\alt}{Alt}
\DeclareMathOperator{\tr}{Tr}
\DeclareMathOperator{\gal}{Gal}
\DeclareMathOperator{\mo}{mod}
\DeclareMathOperator{\ord}{ord}
\DeclareMathOperator{\gc}{gcd}
\g@addto@macro{\endabstract}{\@setabstract}
\newcommand{\authorfootnotes}{\renewcommand\thefootnote{\@fnsymbol\c@footnote}}%
\begin{document}
\setcounter{page}{1} 

\baselineskip .65cm 
\pagenumbering{arabic}
\title{ Constant rank subspaces of alternating bilinear forms from Galois Theory }
\author [Ashish Gupta and~Sugata Mandal]{Ashish Gupta, Sugata Mandal}

\address{Ashish Gupta, Department of Mathematics\\
Ramakrishna Mission Vivekananda Educational and Research Institute (Belur Campus) \\
Howrah, WB 711202\\
India}
\email{a0gupt@gmail.com \thanks{}}
\address{Sugata Mandal, Department of Mathematics\\
Ramakrishna Mission Vivekananda Educational and Research Institute (Belur Campus) \\
Howrah, WB 711202\\
India}
\email{gmandal1961@gmail.com\thanks{}}

\maketitle
\begin{abstract}
Let $L/K$ be a cyclic extension of degree $n = 2m$. It is known that the  space $\alt_K(L)$ of alternating $K$-bilinear forms (skew-forms) on $L$ decomposes into a direct sum of $K$-subspaces $A^{\sigma^i}$ indexed by the elements of $\gal(L/K) = \langle \sigma \rangle$. It is also known that the components $A^{\sigma^i}$ can have nice constant-rank properties. We enhance and enrich these constant-rank results and show that the component $A^\sigma$ often decomposes directly into a sum of constant rank subspaces, that is, subspaces all of whose non-zero skew-forms have a fixed rank $r$. In particular, this is always true when $-1 \not \in L^2$. As a result we deduce a decomposition of $\alt_K(L)$ into subspaces of constant rank in several interesting situations. 
We also establish that a subspace of dimension $\frac{n}{2}$ all of whose nonzero skew-forms are non-degenerate can always be found in $A^{\sigma^i}$ where $\sigma^i$ has order divisible by $2$. 

\noindent \textbf{Keywords.} alternating form, skew-symmetric form, constant rank space, cyclic extension\\

\noindent 
\textbf{2020 Math. Subj. Class.}: 12F05, 12F10, 15A63
\end{abstract}
\section{Introduction}
Let $K$ be a field of characteristic other than two and $\alt_K (V)$ denote the space of all alternating bilinear forms (skew-forms) on a $K$-space $V$ of dimension $n$. Suppose $K$ admits a Galois extension $L$ of degree $n$. Taking the $n$-dimensional $K$-space $L$ as a model for $V$ it was shown in \cite{GQ09} that ideas from Galois Theory can be fruitfully applied for studying skew-forms on $V$. Notably, this approach sheds light on the subspaces of $\alt_K (V)$ whose nonzero skew-forms all have the same rank equal to $k$, say. Such ``$k$-subspaces" besides being interesting in their own right  play an important role in coding theory (see \cite{PD75},\cite{PD78}). Of particular importance are  the $n$-subspaces of $\alt_K(V)$, that is, subspaces all of whose nonzero skew forms are non-degenerate.   

Replacing $V$ by the $K$-space $L$, we begin with some definitions and facts given in \cite[Lemma 2]{GQ09}. 
For each $\sigma \in G : = \gal(L/K)$ and $b \in L$ we may define the skew-form 
\begin{equation} \label{defn-sk-frm}
f_{b,\sigma}(x,y) = \tr^L_K(b(x\sigma(y)-\sigma(x)y)), \qquad \forall x, y \in L.  
\end{equation}

where $\tr^L_K : L \rightarrow K$ is the Galois-theoretic trace map defined by \[ \tr^L_K(a) = \sum_{\sigma \in \gal(L/K)} \sigma(a), \quad \forall a \in L. \]

With each $\sigma \in G$ we can thus associate a subspace $A^\sigma$ of $\alt_K(L)$ defined as $A^\sigma : = \{f_{b, \sigma} : b \in L \}$. Each $ A^{\sigma}$ has dimension $n$ unless  $\sigma$  has order $2$ (see \emph{\cite[Theorem 1]{GQ09}}).
It  was shown  in \cite{GQ09}  that $\alt(L)$ decomposes as a direct sum of the spaces $A^\sigma$ with $\sigma$ ranging over the elements of the Galois group $G$ (see Theorems 1 and 2 below).   

Let $ \ord(\sigma) $ denote the order of $ \sigma \in G $. Interestingly, for $n$ odd, each $A^\sigma$ is an $ n - n/{\ord(\sigma)}$-subspace (Theorem 1). However when $n$ is even the situation is less clear as in this case we only know that the subspace $A^\sigma$ has a constant rank property only when  $\sigma$  is either an involution or else it has odd order  (see Section 2).  When $\sigma$ has even order it is only known that a skew form $f_{b, \sigma} \in A^{\sigma}$ may have rank either $n$  or $n - 2n/\ord(\sigma) $  and that both of these values are attained as ranks of suitable skew forms in $A^{\sigma}$. 
We study this last case more closely here  and show that there are constant-rank subspaces in $A^{\sigma}$.  In fact,  $A^{\sigma}$ always has an $n$-subspace of dimension $\frac{n}{2}$ and moreover decomposes as a direct sum of $k$-subspaces for suitable $k$ (see Theorems A-D).  



\begin{theorem_1}[\cite{GQ09})]
    \label{odd decomposition}
    Suppose that $ n = [L : K] $ is odd and the Galois group $G = \{1,\sigma_1,\cdots,\sigma_{m}, \sigma_{1}^{-1},\cdots,\sigma_{m}^{-1}\}$ where  $ m = (n - 1)/2 $. Then there is a direct  decomposition 
	\begin{equation}\label{odd direct decom}
	 \alt_K(L)=A^{1}\oplus A^{2}\oplus \cdots\oplus A^{m}, 
	\end{equation} 
	where $ A^{ i} := A^{\sigma_i}$ has dimension $n$ ($1 \le i \le m$). Moreover, if $\ord(\sigma_{i})$ = $2r_i +1$, the non zero skew-forms in $A^i$ all have rank $n - \frac{n}{2r_i +1} $. 
\end{theorem_1}
\begin{theorem_2}
\emph{(\cite{GQ09})}
Suppose that $ n = [L : K] $ is even and the Galois group $$G = \{1, \tau_1,\cdots,\tau_{k},\sigma_1,\cdots,\sigma_{m}, \sigma_{1}^{-1},\cdots,\sigma_{m}^{-1}\},$$ where $ \{\tau_{1},\tau_{2} ,\cdots,\tau_{k} \}$ are the involutions of $ G $, then there is a direct  decomposition 
	\begin{equation}\label{ev decompose}
	\alt_K(L)= B^{1}
\oplus B^{2} \oplus \cdots\oplus B^{k}\oplus A^{1} \oplus A^{2} \oplus \cdots \oplus A^{m}.    
	\end{equation}
	 where $B^{i}: =A^{\tau_i}$ is an $n$-subspace of 
  dimension $n/2$ for all $1 \le i \le k$ and   $A^{j}: = A^{\sigma_j}$ ($1 \le j \le m$) has dimension $n$. Moreover if $\ord(\sigma_i)$ is odd then $A^{\sigma}$ is an $n - n/{\ord(\sigma_i)}$-subspace of dimension $n$. 
\end{theorem_2}
   If $ L/K$ is cyclic Galois extension of degree $ n $  with $G= \gal(L/K) = \langle \sigma \rangle$ we define $A^i := A^{\sigma^{i}}$. Thus $ A^{i}= \{f_{b,\sigma^i}: b \in L\}$. If  $n$ is even  then there is a unique involution $ \tau_1 = \sigma^{n/2}$ and in this case we denote $B^1 : =A^{\tau_1}=\{f_{b,\sigma^{n/2}}: b\in L\}$. Then the decomposition \eqref{ev decompose} becomes 
\begin{equation}\label{ dcmpstn for cyclic}
\alt_K(L)=  B^{1} \oplus A^{1}\oplus A^{2}\oplus \cdots\oplus A^{m},
\end{equation}
   \begin{theorem_A} \label{odd dcmposn for any field}
   Let $K$ be a field and $n = 2k$, where  $ k\geq 1$ is odd.  Let $L$ be any cyclic  extension of $K$ of degree $n$ with Galois group $G = \langle \sigma \rangle$. 
   Then
   \begin{equation} \label{odd dcmposn for any field eq}
 A^1 = \mathcal{U}_{1} \oplus \mathcal{V}_{1},
 \end{equation}
 where    $ \mathcal{U}_{1}$ is an $n$-subspace  of dimension $k $ and $\mathcal{V}_{1}$ is an $ (n-2)$-subspace of dimension $k$.
  \end{theorem_A}

In view of Theorem A in following theorems we focus on the case where $ n$ is divisible by $ 4 $.  

\begin{theorem_B} 
  Suppose $ n = 2^{\alpha} k $ where $ \alpha \geq 2$ and $k$ is odd. Let  $ K $ be an algebraic number field such that $ -1$ is not a square in $ K$. Then there exists a cyclic  extension $L$ of $ K $ of degree $ n $ with the Galois group $G = \langle \sigma \rangle$ such that       
   \begin{equation}\label{decmpsn A1 algebraic}    
A^{1} =  \mathcal{E}_1 \oplus  \cdots  \oplus \mathcal{E}_{\alpha - 1} \oplus \mathcal{V}_1 \oplus \mathcal{V}_2 ,
  \end{equation}
   where
    \begin{itemize}
       \item[(i)] $\mathcal{E}_{i}$ is an  $n$-subspace of dimension $n/2^i$ for $ 1 \leq i \leq \alpha - 1$,
        \item[(ii)] $\mathcal{V}_j$ is an $( n-2)$-subspace of dimension $ k $ for $ 1 \leq j \leq 2$. 
    \end{itemize}
  \end{theorem_B}  \begin{theorem_C}\label{th for finite field}
  Let $ K $  be a finite field with $ q$ elements such that $ -1$ is not a square in $ K$. Let  $ q + 1 = 2^a l$ (l odd) where $a \geq 1$ and  $ n = 2^{\alpha}k$ (k odd) where  $ \alpha \geq 2$. Suppose   $ L $ is a cyclic  extension of $ K $ of degree $ n $ with $\gal(L/K)=\langle\sigma_f\rangle$ where $\sigma_f$ is the  Frobenius map of $L$ defined by  $\sigma_f: b \rightarrow b^q$. 
         \begin{itemize}
             \item[(1)] 
 If $\alpha \leq a+1 $ then  
   \begin{equation}\label{ A1 finite field}    
A^{1} = \mathcal{V}_1 \oplus \mathcal{V}_2 \oplus \mathcal{E}_1 \oplus \cdots \oplus \mathcal{E}_{\alpha - 1},
   \end{equation}
    where
    \begin{itemize}
        \item[(i)] $\mathcal{E}_{i}$ is an  $n$-subspace of dimension $n/2^i$ for $ 1 \leq i \leq \alpha - 1$,
        \item[(ii)]  $\mathcal{V}_j$ is an $ (n-2)$-subspace of dimension $ k $ for $ 1 \leq j \leq 2$. 
    \end{itemize} 
    \item[(2)] If $\alpha >a+1 $ and $ l=1$, that is, $ q = 2^a - 1$, then 
   \begin{equation}\label{decmpsn finite field2}          
A^{1} = \mathcal{V}_1 \oplus \mathcal{V}_2 \oplus \mathcal{E}_1 \oplus \cdots \oplus \mathcal{E}_{\alpha - 1},
   \end{equation}
   where
    \begin{itemize}
        \item[(i)]   $\mathcal{E}_{i}$ is an  $n$-subspace of dimension $n/2^i$ for $ 1 \leq i \leq a$ and  an $ (n-2)$-subspace of dimension $n/2^i$ for $ a+1 \leq i \leq \alpha -1$,
        \item[(ii)]  $\mathcal{V}_j$ is an $( n-2)$-subspace of dimension $ k $ for $ 1 \leq j \leq 2$. 
    \end{itemize}
         \end{itemize}
  \end{theorem_C}
  \begin{theorem_D}
   Let $p$ be a prime and $K = \mathbb{Q}_p$ be the $p$-adic  completion of $ \mathbb{Q}$ such that $ -1$ is not a square in $ K$. Let  $ p + 1 = 2^a l$ (l odd) where $a \geq 1$ and  $ n = 2^{\alpha}k$ (k odd) where  $ 2 \leq \alpha \leq a+1$. Then there exists a cyclic  extension   $ L $ of  $ K $ of degree $ n $ such that the decomposition \eqref{ A1 finite field}    holds.  
  \end{theorem_D}
\section{Skew forms and Galois extensions}\label{section2}
Retaining the notation of the previous section  we now collect some basic results from  \cite{GQ09} concerning the application of Galois theory to the study of some crucial properties of bilinear forms over $K$. In  the following $L/K$ is a (not necessarily cyclic) Galois extension and $1 \ne \sigma \in \gal{L/K}$ is arbitrary. 

\begin{lem}\emph{(\cite[Lemma 2]{GQ09})}
\label{GQ degeneracy condition}
Let $ f = f_{ b,\sigma} $ be an alternating bilinear form as defined above with $ b \neq 0 $ and let $F$ be the fixed
field of the automorphism $\sigma^{2}$. If $\sigma(b)b^{-1}$ is expressible in the form $ \sigma^{2}(c)c^{-1} $ for some $ c \in L^{\times}$ then 
$\rk(f_{ b,\sigma})  = n - n/[L:F]$. Otherwise $ \rk(f_{ b,\sigma}) = n $.
\end{lem}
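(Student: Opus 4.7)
The plan is to compute the radical (left or right kernel) of $f_{b,\sigma}$ directly and identify its dimension. Recall that the trace pairing $(u,v) \mapsto \tr^L_K(uv)$ on $L$ is a non-degenerate $K$-bilinear form (a standard fact for separable, hence Galois, extensions). So $y$ lies in the radical of $f_{b,\sigma}$ iff the element $b(x\sigma(y) - \sigma(x)y)$, viewed as a function of $x$, is annihilated by $\tr^L_K$ for every $x \in L$, which by non-degeneracy of the trace pairing means the coefficient of $x$ must vanish.

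To extract that coefficient one must replace the $\sigma(x)$ in the second term by $x$. This is achieved by the Galois-invariance of the trace: $\tr^L_K(\sigma^{-1}(u)) = \tr^L_K(u)$ for any $u \in L$. Applying this with $u = b\sigma(x)y$ gives $\tr^L_K(b\sigma(x)y) = \tr^L_K(\sigma^{-1}(b)\, x\, \sigma^{-1}(y))$. Thus $y$ lies in the radical iff
\[
\tr^L_K\!\bigl( x\,\bigl(b\sigma(y) - \sigma^{-1}(b)\sigma^{-1}(y)\bigr)\bigr) = 0 \quad\forall x\in L,
\]
which, by non-degeneracy of the trace pairing, is equivalent to $b\sigma(y) = \sigma^{-1}(b)\sigma^{-1}(y)$. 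Applying $\sigma$ to both sides and rearranging yields the clean condition
\[
\sigma^{2}(y)\, y^{-1} \;=\; b\,\sigma(b)^{-1} \;=\; \bigl(\sigma(b)b^{-1}\bigr)^{-1}, \qquad y \ne 0.
\]

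Now I would analyze this equation in two cases. If $\sigma(b)b^{-1}$ is \emph{not} of the form $\sigma^2(c)c^{-1}$ for any $c \in L^\times$, then (taking inverses) $b\sigma(b)^{-1}$ is not of the form $\sigma^2(y)y^{-1}$ either, so the radical is trivial and $f_{b,\sigma}$ is non-degenerate, giving $\rk(f_{b,\sigma}) = n$. If instead $\sigma(b)b^{-1} = \sigma^2(c)c^{-1}$ for some $c \in L^\times$, then $y_0 := c^{-1}$ is a particular nonzero solution, and any other solution $y$ satisfies $\sigma^2(y/y_0) = y/y_0$, so $y/y_0 \in F = L^{\sigma^2}$. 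Hence the radical equals the one-dimensional $F$-line $F\cdot y_0$, which has $K$-dimension $[F:K] = n/[L:F]$. Therefore $\rk(f_{b,\sigma}) = n - n/[L:F]$, completing the proof.

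The only delicate point is the trace manipulation that converts the defining condition for $y \in \mathrm{rad}(f_{b,\sigma})$ into the purely algebraic equation $\sigma^2(y)/y = b/\sigma(b)$; once this is in hand the two cases and the dimension count of the radical as an $F$-line are immediate. Everything else is formal, so I do not anticipate any genuine obstacle.
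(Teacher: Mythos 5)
Your proof is correct, and it is essentially the standard argument for this result (which the paper itself quotes from \cite{GQ09} without reproducing a proof): use non-degeneracy of the trace pairing of the separable extension $L/K$ to identify the radical of $f_{b,\sigma}$ with the solution set of $\sigma^{2}(y)/y = b/\sigma(b)$, observe that this set is either trivial or a coset-line $Fy_0$ of $K$-dimension $[F:K]=n/[L:F]$, and conclude. All steps check out, including the substitution $y_0=c^{-1}$ and the use of the fact that left and right radicals of an alternating form coincide.
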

\begin{lem}\emph{(\cite[Lemma 4]{GQ09})} \label{existance a non degenarate}
 Suppose that the automorphism $\sigma $ has even multiplicative order $ 2r $, say. Then there exist elements $ b \in L^{\times} $ such that the equation $ \sigma(b)b^{-1}  = \sigma^{2}(c)c^{-1} $ has no solution for all $ c\in L^{\times} $. 
\end{lem}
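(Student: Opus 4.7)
The plan is to translate the lemma, via Hilbert's Theorem~90, into a question about the relative norm $N_{L/F}$, and then exhibit a suitable element either by surjectivity (when $L$ is finite) or by a primitive-element/polynomial-identity argument.

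Let $F := L^{\langle\sigma^2\rangle}$, so that $[L:F] = r$ and $[F:K] = 2$; write $\tau$ for the nontrivial element of $\gal(F/K)$, which coincides with the restriction $\sigma|_F$. Hilbert's Theorem~90 applied to the cyclic extension $L/F$ gives $\{\sigma^2(c)c^{-1} : c\in L^\times\} = \ker N_{L/F}$, so the lemma is equivalent to producing $b\in L^\times$ with $N_{L/F}(\sigma(b)b^{-1}) \neq 1$. A direct computation, using that $\langle\sigma^2\rangle$ is normal in $\langle\sigma\rangle$, yields $N_{L/F}(\sigma(x)) = \sigma(N_{L/F}(x))$, and since $N_{L/F}(b)\in F$,
\[
N_{L/F}\bigl(\sigma(b)b^{-1}\bigr) \;=\; \frac{\tau(N_{L/F}(b))}{N_{L/F}(b)}.
\]
This equals $1$ exactly when $N_{L/F}(b) \in F^\tau = K$. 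Hence the task reduces to exhibiting $b\in L^\times$ with $N_{L/F}(b)\notin K$.

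For this existence statement, I would split according to whether $L$ is finite. If $L$ is finite, a standard cyclic-group computation shows that $N_{L/F}:L^\times\to F^\times$ is surjective, so any $b$ mapping to an element of $F^\times\setminus K^\times$ (which is nonempty since $K\subsetneq F$) suffices. If $L$ is infinite (so $F$ is infinite too), I would pick a primitive element $\theta$ with $L = K(\theta)$ and consider $b = \theta + u$ for $u\in F$. Since $[K(\theta):K] = 2r > r$, the minimal polynomial of $\theta$ over $F$, which has degree $r$, must have at least one coefficient in $F\setminus K$; otherwise $\theta$ would satisfy a polynomial of degree $r$ over $K$. Consequently, the polynomial
\[
P(u) \;:=\; N_{L/F}(\theta+u) \;=\; \prod_{i=0}^{r-1}\bigl(\sigma^{2i}(\theta)+u\bigr)\;\in\; F[u]
\]
has a coefficient in $F\setminus K$, so $P(u) - \tau(P)(u) \in F[u]$ is a nonzero polynomial; since $F$ is infinite in this subcase, it fails to vanish on all of $F$, and for any $u\in F$ with $P(u)\neq \tau(P(u))$, the element $b = \theta+u$ satisfies $N_{L/F}(b)\notin K$, as desired.

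The chief obstacle is this final existence step: identifying a coefficient of the minimal polynomial of the primitive element that lies outside $K$, and then showing that this failure of $K$-containedness propagates from the coefficients of $P$ to values $P(u)$ for appropriately chosen $u\in F$. The preliminary Hilbert~90 reduction and the intertwining identity between $\sigma$ and $N_{L/F}$ are essentially formal.
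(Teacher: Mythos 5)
The paper itself offers no proof of this lemma: it is imported verbatim from \cite[Lemma 4]{GQ09}, so your argument can only be measured against the original source rather than against anything in this paper. Your reduction --- Hilbert 90 for the cyclic extension $L/F$ turns the statement into the existence of $b$ with $N_{L/F}(b)\notin F^{\tau}$ --- is the standard one and is in the spirit of Gow--Quinlan's argument. Two points need attention. First, in the paper's Section 2 the extension $L/K$ is an arbitrary Galois extension and $\sigma$ is an arbitrary element of even order, so $[F:K]$ need not equal $2$ and $F/K$ need not even be Galois; you must begin by replacing $K$ with the fixed field $L^{\langle\sigma\rangle}$ (harmless, since the assertion to be proved mentions only $L$ and $\sigma$), and only after that is $\tau=\sigma|_F$ the nontrivial element of $\gal(F/K)$ and $F^{\tau}=K$. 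Second, and more substantively, your final step in the infinite case conflates $\tau(P(u))$ with $\tau(P)(u)$: for $u\in F$ one has $\tau(P(u))=\tau(P)\bigl(\tau(u)\bigr)$, so finding $u\in F$ where the polynomial $P-\tau(P)$ does not vanish does not by itself yield $P(u)\notin K$. The repair is immediate: restrict $u$ to $K$ (which is infinite exactly when $L$ is), where $\tau(u)=u$ and the two expressions coincide. Everything else --- the identity $N_{L/F}(\sigma(x))=\sigma(N_{L/F}(x))$, the observation that the minimal polynomial of $\theta$ over $F$ must have a coefficient outside $K$ because $[K(\theta):K]=2r>r$, and the surjectivity of the norm in the finite-field case --- is correct, and with these two adjustments the proof is complete and self-contained.
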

\begin{rema}\label{isomorphism}
    If $\sigma$ is not an involution then  the map $ b \rightarrow f_{b,\sigma}$ defines  an isomorphism of  $K$-spaces between $A^\sigma$ and $L$  \emph{\cite[Theorem 1]{GQ09}}.
\end{rema}

   \begin{lem}\emph{(\cite[Lemma 3]{GQ09})}
    \label{rank of f b sigma for odd case }  Suppose that the automorphism $ \sigma $ has odd multiplicative order $ 2r + 1 > 1 $, say. Then, if $ b \neq 0 $,
the rank of the skew-form $ f = f_{ b,\sigma } $ is $ n - n/{2r+1} $.
\end{lem}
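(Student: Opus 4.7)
The plan is to invoke the degeneracy criterion of Lemma \ref{GQ degeneracy condition} and then rule out the exceptional case in which $\rk(f_{b,\sigma}) = n$. Set $F = L^{\sigma^2}$. Since $2r+1$ is odd, $2$ is a unit modulo $2r+1$, so $\langle \sigma^2 \rangle = \langle \sigma \rangle$; in particular $\sigma^2$ also has order $2r+1$ and $[L:F] = 2r+1$. Lemma \ref{GQ degeneracy condition} then asserts that for $b \neq 0$ the rank of $f_{b,\sigma}$ is $n - n/(2r+1)$ provided the element $\sigma(b)b^{-1}$ lies in the image of the map $c \mapsto \sigma^2(c)c^{-1}$, and equals $n$ otherwise. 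So the entire task reduces to producing, for every nonzero $b$, some $c \in L^{\times}$ with $\sigma(b)b^{-1} = \sigma^2(c)c^{-1}$.

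To find such a $c$ I would apply Hilbert's Theorem 90 to the cyclic extension $L/F$ with Galois group $\langle \sigma^2 \rangle$. This turns the problem into verifying that the norm
\[
N_{\sigma^2}\!\bigl(\sigma(b)b^{-1}\bigr) \;=\; \prod_{i=0}^{2r} \sigma^{2i}\!\bigl(\sigma(b)b^{-1}\bigr) \;=\; \frac{\prod_{i=0}^{2r} \sigma^{2i+1}(b)}{\prod_{i=0}^{2r} \sigma^{2i}(b)}
\]
equals $1$. The key and essentially the only calculation is to note that, because $\gcd(2, 2r+1) = 1$, both index sets $\{2i \bmod (2r+1) : 0 \le i \le 2r\}$ and $\{(2i+1) \bmod (2r+1) : 0 \le i \le 2r\}$ are complete residue systems modulo $2r+1$. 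Hence the numerator and the denominator each collapse to the full $\sigma$-norm $\prod_{j=0}^{2r}\sigma^{j}(b) \in F$, so the quotient is $1$. Feeding this back into Lemma \ref{GQ degeneracy condition} gives $\rk(f_{b,\sigma}) = n - n/(2r+1)$.

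There is no substantive obstacle: the odd-order hypothesis is doing all the work, first by making $\langle \sigma \rangle = \langle \sigma^2 \rangle$ (so $F$ is the base and $[L:F] = 2r+1$), and second by ensuring that the two exponent progressions $\{2i\}$ and $\{2i+1\}$ cover $\mathbb{Z}/(2r+1)\mathbb{Z}$. The only mild care required is to be explicit about which Galois group one is taking norms over, and to quote Hilbert 90 for the correct cyclic subgroup $\langle \sigma^2 \rangle \leq \gal(L/K)$.
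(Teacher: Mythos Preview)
Your argument is correct. The paper does not supply its own proof of this lemma---it is quoted from \cite[Lemma~3]{GQ09}---but your route (reduce via Lemma~\ref{GQ degeneracy condition} to a Hilbert~90 norm condition for the cyclic extension $L/F$ with $\gal(L/F)=\langle\sigma^{2}\rangle$, then observe that both exponent sets $\{2i\}$ and $\{2i+1\}$ sweep out $\mathbb{Z}/(2r+1)\mathbb{Z}$ so the norm collapses to $1$) is precisely the mechanism the paper itself invokes in Proposition~\ref{GMdegen-crit}, and is the standard proof.
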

\begin{lem}\emph{(\cite[Lemma 4]{GQ09})}
 \label{rank of f b sigma for even case } Suppose that the automorphism $\sigma$ has even multiplicative order $2r \geq 2$, say. Then, if $ b \neq 0 $,
the rank of the skew-form  $f=f_{b,\sigma} $ is either $ n-\frac{n}{r}$ or $ n $.
\end{lem}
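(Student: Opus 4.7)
The plan is to take $L$ to be the unramified extension of $K = \mathbb{Q}_p$ of degree $n$ (generated by a primitive $(p^n-1)$-th root of unity, with Frobenius $\sigma_f$ generating $G = \gal(L/K)$), and to build the decomposition through the tower of intermediate subfields. First I would reformulate Lemma~\ref{GQ degeneracy condition} via Hilbert~90: since $L/F$ is cyclic of degree $n/2$ generated by $\sigma_f^2$ (with $F = L^{\sigma_f^2}$ the unique quadratic subextension, equal to $K(i)$ under the hypothesis $-1\notin K^2$), the subgroup $\{\sigma_f^2(c)/c : c\in L^\times\}$ coincides with $\ker N_{L/F}$; combining this with the identity $N_{L/F}(\sigma_f(b)/b) = \sigma_f(N_{L/F}(b))/N_{L/F}(b)$ translates the criterion of Lemma~\ref{GQ degeneracy condition} into the rank dichotomy: for $b \ne 0$, $\rk(f_{b,\sigma_f}) = n-2$ iff $N_{L/F}(b) \in K^\times$, and $n$ otherwise.

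Next, I would construct the subspaces using the tower $K \subset L^{(0)} \subset L^{(1)} \subset \cdots \subset L^{(\alpha)} = L$, where $L^{(j)}$ is the unique subfield of degree $2^j k$ over $K$ (the fixed field of $\langle\sigma_f^{2^j k}\rangle$). Set $\mathcal{V}_1 := L^{(0)}$ and $\mathcal{V}_2 := i\cdot L^{(0)}$; these are $K$-subspaces of dimension $k$ with $L^{(1)} = \mathcal{V}_1 \oplus \mathcal{V}_2$. For each $1\le j\le\alpha-1$, fix $\eta_j \in L^{(\alpha-j+1)} \setminus L^{(\alpha-j)}$ with $\eta_j^2 \in L^{(\alpha-j)}$ (available since every step of the tower is quadratic) and put $\mathcal{E}_j := \eta_j\cdot L^{(\alpha-j)}$, of $K$-dimension $2^{\alpha-j}k = n/2^j$. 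Iterating the splitting $L^{(m+1)} = L^{(m)} \oplus \eta_{\alpha-m}L^{(m)}$ yields $L = \mathcal{V}_1 \oplus \mathcal{V}_2 \oplus \mathcal{E}_{\alpha-1} \oplus \cdots \oplus \mathcal{E}_1$, which transports under the isomorphism $b \mapsto f_{b,\sigma_f}$ to the claimed decomposition of $A^1$.

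Since $L^{(0)}$ is fixed by $\sigma_f^k$ and $k$ is odd, reindexing the norm product gives $N_{L/F}(b) = N_{L^{(0)}/K}(b)^{2^{\alpha-1}} \in K^\times$ for $0\ne b\in L^{(0)}$; an almost identical computation with an extra factor $i^{k\cdot 2^{\alpha-1}} \in \{\pm 1\} \subset K^\times$ (using $\alpha\ge 2$) handles $\mathcal{V}_2$, so both are $(n-2)$-subspaces. For each $\mathcal{E}_j$, multiplicativity of the norm gives $N_{L/F}(\eta_j c) = N_{L/F}(\eta_j)\cdot N_{L/F}(c) \in N_{L/F}(\eta_j)\cdot H_j$ for $c\in L^{(\alpha-j)\times}$, where $H_j := N_{L/F}(L^{(\alpha-j)\times})$; the $n$-subspace property reduces to requiring this entire coset to miss $K^\times$, i.e.\ $N_{L/F}(\eta_j)\notin K^\times\cdot H_j$. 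The main technical work is a local class-field-theoretic computation for the unramified tower, yielding $H_j = p^{(n/2)\mathbb{Z}} \times \mu_{(p^2-1)/2^j} \times U_F^{(1)}$ and $K^\times\cdot H_j = p^{\mathbb{Z}} \times \mu_{(p^2-1)/2^j} \times U_F^{(1)}$, hence $F^\times / (K^\times H_j) \cong \mathbb{Z}/2^j$. The key simplification $\mu_{p-1}\subseteq \mu_{(p^2-1)/2^j}$ is equivalent to $2^j\mid p+1$, which holds for all $1\le j\le\alpha-1$ precisely under the hypothesis $\alpha\le a+1$. Nontriviality of $F^\times/(K^\times H_j)$ then guarantees a suitable $\eta_j$; concretely, the Teichm\"uller lift of the square-class representative constructed in the proof of Theorem~C(1) furnishes one, and Galois-equivariance of the Teichm\"uller embedding in unramified extensions ensures the norm-class analysis transports intact from the residue-field setting of Theorem~C(1) to $L/K$.
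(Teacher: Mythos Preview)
Your proposal does not address the stated lemma. The statement is Lemma~\ref{rank of f b sigma for even case}: for $\sigma$ of even order $2r$ and $b\ne 0$, the rank of $f_{b,\sigma}$ is either $n$ or $n-n/r$. This is a general fact about arbitrary Galois extensions $L/K$, quoted in the present paper from \cite[Lemma~4]{GQ09} without proof. What you have written is instead a proof outline for Theorem~D: you specialize to the unramified extension of $\mathbb{Q}_p$, construct the subspaces $\mathcal{V}_1,\mathcal{V}_2,\mathcal{E}_1,\dots,\mathcal{E}_{\alpha-1}$ via a tower of quadratic steps, and argue through norm computations and local class field theory that each has the claimed constant-rank property. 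In your first paragraph you do \emph{use} the rank dichotomy (in the special case $2r = n$, giving ranks $n$ or $n-2$), but you do not prove it; you simply combine Lemma~\ref{GQ degeneracy condition} with Hilbert~90 to rewrite the degeneracy criterion as a norm condition, and then proceed with the decomposition argument for $A^1$.

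If your intention was in fact to prove Theorem~D, your outline is broadly correct but considerably more elaborate than the paper's argument. The paper avoids any class-field-theoretic computation of $H_j$ or of the quotient $F^\times/(K^\times H_j)$, and it does not need to \emph{choose} the elements $\eta_j$ at all. Instead it uses the fixed eigenspaces $E_i = \{b : \sigma^{n/2^i}(b)=-b\}$ from Lemma~\ref{Egnspcdecomp} together with the criterion of Lemma~\ref{ degeneracy for E_i}, which reduces nondegeneracy of $f_{b_i,\sigma}$ to the impossibility of $\eta_i := \sigma(w_i)/w_i$ being a $2^i$-th root of unity with $\sigma(\eta_i)\eta_i=-1$. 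Under the hypothesis $\alpha\le a+1$ one has $2^i \mid 2^a \mid p+1 \mid p^n - 1$, so any such $\eta_i$ lies in $\langle\theta\rangle$ and hence $\sigma(\eta_i)\eta_i = \eta_i^{p+1} = 1 \ne -1$. This two-line contradiction replaces your entire local computation; your approach would work, but it is substantially heavier machinery for the same conclusion.
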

\section{Preliminary results}
Our aim in this section is to establish certain facts which will be found useful in the subsequent sections and are also interesting in their own right.
   Recall that if $F$ is an intermediate subfield and $a \in L$ then the $L/F$-norm  
 $N_{L/F}(a)$ of $a $ is defined as $N_{L/F}(a) = \displaystyle\prod_{\theta \in \gal(L/F)} \theta(a)$.
 \begin{notation} \label{L_i}
\textbf{Throughout this section $L/K$ denotes a cyclic extension with Galois group $\gal(L/K) = \langle \sigma \rangle$}. For the sake of convenience in what follows we shall denote the subfield $L^{\langle \sigma^{i} \rangle}$ as $L_i$.
\end{notation}
 We begin by noting the following restatement of the degeneracy criterion Lemma \ref{GQ degeneracy condition}. 

\begin{prop}
\label{GMdegen-crit}
Let $b \in L$. Then the skew-form $f_{b,\sigma}$ is degenerate if and only if 
\begin{equation}\label{form1} 
N_{L/L_{2}} (\sigma(b)/b) = 1, 
\end{equation}
that is, $f_{b,\sigma}$ is degenerate if and only if \begin{equation} \label{form2}
N_{L/L_{2}} (b) =  b\sigma^{2}(b)\cdots \sigma^{n-2}(b) \in K. 
\end{equation}
\end{prop}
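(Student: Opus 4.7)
The plan is to reduce the degeneracy criterion of Lemma~\ref{GQ degeneracy condition} to a norm condition via Hilbert's Theorem 90. By Lemma~\ref{GQ degeneracy condition}, the skew-form $f_{b,\sigma}$ is degenerate (i.e.\ has rank strictly less than $n$) if and only if the element $\sigma(b)/b \in L^{\times}$ can be written as $\sigma^{2}(c)/c$ for some $c \in L^{\times}$. Observe that $L/L_{2}$ is a cyclic Galois extension with Galois group $\langle \sigma^{2} \rangle$, so I would apply Hilbert~90 to this extension: an element $a \in L^{\times}$ is of the form $\sigma^{2}(c)/c$ if and only if $N_{L/L_{2}}(a) = 1$. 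Taking $a = \sigma(b)/b$ yields the first equivalence
\[
f_{b,\sigma} \text{ is degenerate} \iff N_{L/L_{2}}(\sigma(b)/b) = 1,
\]
establishing \eqref{form1}.

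For the second formulation \eqref{form2}, I would rewrite the norm ratio multiplicatively as
\[
N_{L/L_{2}}(\sigma(b)/b) = \frac{N_{L/L_{2}}(\sigma(b))}{N_{L/L_{2}}(b)}.
\]
The key computational step is to check that $\sigma$ commutes with taking the norm from $L$ down to $L_{2}$, i.e.\ $N_{L/L_{2}}(\sigma(b)) = \sigma\bigl(N_{L/L_{2}}(b)\bigr)$. This follows by expanding
\[
N_{L/L_{2}}(b) \;=\; \prod_{i=0}^{n/2-1}\sigma^{2i}(b) \;=\; b\,\sigma^{2}(b)\cdots \sigma^{n-2}(b)
\]
and applying $\sigma$ term by term. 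Consequently, the condition $N_{L/L_{2}}(\sigma(b)/b)=1$ is equivalent to $\sigma$ fixing $N_{L/L_{2}}(b)$. Since $N_{L/L_{2}}(b)$ already lies in $L_{2}$ (where $\sigma^{2}$ acts trivially), being $\sigma$-invariant is equivalent to lying in the fixed field $L^{\langle \sigma \rangle} = K$, giving \eqref{form2}.

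I would briefly note the consistency with the odd-order case: when $n$ is odd, $\sigma^{2}$ still generates $\langle \sigma \rangle$ so $L_{2}=K$, and then $N_{L/L_{2}}(b) \in K$ is automatic, recovering the fact (Lemma~\ref{rank of f b sigma for odd case }) that every nonzero $f_{b,\sigma}$ is degenerate when $\ord(\sigma)$ is odd. The main (and only) non-trivial input is Hilbert~90; the remainder is straightforward bookkeeping with the norm. The mildly delicate point is confirming that $\sigma$ and $N_{L/L_{2}}$ commute, which is why I would isolate that step explicitly.
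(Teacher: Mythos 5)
Your proposal is correct and follows essentially the same route as the paper: Lemma \ref{GQ degeneracy condition} plus Hilbert 90 applied to the cyclic extension $L/L_{2}$ gives \eqref{form1}, and the observation that $\sigma$ commutes with $N_{L/L_{2}}$ turns the norm-one condition into $\sigma$-invariance of $b\sigma^{2}(b)\cdots\sigma^{n-2}(b)$, hence membership in $K$. The only difference is that you spell out the commutation step and the consistency check with the odd-order case, which the paper leaves implicit.
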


\begin{proof}
By Lemma \ref{GQ degeneracy condition}, the skew form $f_{b, \sigma}$ is degenerate if and only if $\sigma(b)/b = \sigma^2(c)/c$ for some $c \in L$. The first assertion is now clear in view of the Hilbert Theorem 90.
Moreover the condition $ N_{L/L_2 }(\sigma(b)/b) = 1 $ is easily seen to be equivalent to the product $b\sigma^{2}(b)\cdots \sigma^{n-2}(b) $ being $\sigma$-invariant. 

\end{proof}

Suppose that $\sigma^i$ is not an involution. By Lemma \ref{GQ degeneracy condition} the skew-form $f_{b,\sigma^i} \in A^i \subseteq \alt_K(L) $ is degenerate if and only if $\sigma^i(b)/b= \sigma^{2i}(c)/c$.  As $\sigma^{2i}$ is a generator for  $\gal(L/L_{2i})$,  in view of Hilbert Theorem 90, $f_{b,\sigma^i}$ is degenerate if and only if  $N_{L/L_{2i} }(\sigma^i(b)/b) = 1$. 
   A glance at Proposition  \ref{GMdegen-crit} above shows that this is precisely the condition for the skew-form $ \displaystyle f^{\sim}_{b,\sigma^{i}} \in \alt_{L_i}(L)$ defined by
    \[ f^{\sim}_{b,\sigma^{i}} = \tr^L_{L_i} (b(x\sigma(y)-\sigma(x)y)), \qquad \forall x, y \in L. \] 
   to be degenerate (we write $f^{\sim}_{b,\sigma^{i}}$ instead of $f^{}_{b,\sigma^{i}}$ to emphasize the fact that we are now considering $L$ as $L_i$-space). 
   
   Let us write
   $\displaystyle A^{\sim 1} := \{ \displaystyle f^{\sim}_{b,\sigma^{i}} \mid b \in L\}$.
   In view of Remark \ref{isomorphism} we then have a K-isomorphism $A^i \equiv L$ via $f_{b,\sigma^i} \mapsto b$ and an $L_i$-isomorphism $L \equiv A^{\sim 1}$ via
   $b \mapsto f^{\sim}_{b,\sigma^{i}}$. The composition of these maps clearly yields a $K$-isomorphism 
   $A^i \cong  A^{\sim 1}$. The following is then clear.
   \begin{rema}\label{crspn}
   With respect to the above isomorphism if an $L_i$-subspace $\mathcal W \le A^{\sim 1}$ has all its non-zero skew forms non-degenerate (or all its non-zero skew forms degenerate)  then the same is true for the corresponding (K-) subspace in $A^i$.  
\end{rema}\label{chng2Ai}

 \begin{lem}\label{Egnspcdecomp} 
     Let $n= 2^{\alpha}k$ where $ \alpha \geq 2$ and $k$ is odd.  Suppose that $L$ is a cyclic  extension of a field $K$ of degree $n$ with Galois group $\gal(L/K) = \langle \sigma \rangle$. Then the following hold.
     \begin{enumerate}
\item[(i)] For $1 \leq i \leq \alpha - 1 $ the subspace $ E_i:= \{b \in L : \sigma^{n/2^i}(b)= -b\} \le L$ has dimension $n/2^i$.

 \item[(ii)] Let $V_1 :=\{b \in L : \sigma^{k}(b) =  b\}$ and $V_2 :=\{b \in L : \sigma^{k}(b) =  -b\}$. Then $\dim(V_1) = \dim (V_2)= k$.
 \end{enumerate}
 \end{lem}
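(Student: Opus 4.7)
My plan is to realize each $(-1)$-eigenspace in question as the complementary summand, inside a larger Galois-theoretic fixed field, of a smaller Galois-theoretic fixed field, and then read off its $K$-dimension by subtraction. The engine is the elementary fact that since $\mathrm{char}(K)\neq 2$, for any $K$-linear endomorphism $\tau$ of $L$ the subspace $\ker(\tau^{2}-1)$ splits as the direct sum $\ker(\tau-1)\oplus\ker(\tau+1)$, because $T-1$ and $T+1$ are coprime in $K[T]$ (their difference is the unit $2$).

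For (i), I would set $\tau:=\sigma^{n/2^{i}}$, which has order $2^{i}\geq 2$ since $i\geq 1$. By definition $E_{i}=\ker(\tau+1)$ as a $K$-linear subspace of $L$. Applying the splitting above to $\ker(\tau^{2}-1)=L^{\langle\tau^{2}\rangle}$ yields
\[
L^{\langle\tau^{2}\rangle} \;=\; L^{\langle\tau\rangle}\,\oplus\, E_{i}.
\]
The Galois correspondence then gives $\dim_{K}L^{\langle\tau\rangle}=n/|\langle\tau\rangle|=n/2^{i}$ and $\dim_{K}L^{\langle\tau^{2}\rangle}=n/|\langle\tau^{2}\rangle|=n/2^{i-1}$, whence $\dim_{K}E_{i}=n/2^{i-1}-n/2^{i}=n/2^{i}$, as required.

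For (ii), I would first note that $\sigma^{k}$ has order $n/\gcd(n,k)=n/k=2^{\alpha}\geq 4$, so $V_{1}=L^{\langle\sigma^{k}\rangle}$ is simply the fixed field of a subgroup of order $2^{\alpha}$, giving $\dim_{K}V_{1}=n/2^{\alpha}=k$ directly from the Galois correspondence. For $V_{2}=\ker(\sigma^{k}+1)$ I would reuse the argument of (i) with $\tau:=\sigma^{k}$, obtaining $\dim_{K}V_{2}=n/2^{\alpha-1}-n/2^{\alpha}=k$. I do not foresee any real obstacle, since everything reduces to the splitting principle plus the Galois correspondence; the only point needing care is the verification that $\tau^{2}$ has order exactly one-half the order of $\tau$, which is automatic here because in both applications $\tau$ has $2$-power order.
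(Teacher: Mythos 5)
Your proof is correct, but it takes a genuinely different route from the paper's. The paper treats $\tau=\sigma^{n/2^{i}}$ as a $K$-linear operator on $L$, invokes Dedekind's independence theorem to conclude that its minimal polynomial is $x^{2^{i}}-1$, deduces that $-1$ is an eigenvalue, and then identifies the $(-1)$-eigenspace explicitly as a multiplicative translate $E_i=j_iL_{n/2^{i}}$ of the fixed field $L_{n/2^{i}}$, from which the dimension is immediate; part (ii) is handled the same way with $V_2=j_\alpha L_k$. You instead exploit the coprimality of $T-1$ and $T+1$ in characteristic $\neq 2$ to split $L^{\langle\tau^{2}\rangle}=\ker(\tau^{2}-1)$ as $L^{\langle\tau\rangle}\oplus\ker(\tau+1)$ and obtain $\dim_K E_i$ by subtracting fixed-field degrees given by the Galois correspondence. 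Both arguments are sound and of comparable length. The trade-off is that your version avoids Dedekind independence and any separate existence argument for a $(-1)$-eigenvector (nonemptiness of $E_i$ falls out of the dimension count), whereas the paper's version produces the explicit description $E_i=j_iL_{n/2^{i}}$ and $V_2=j_\alpha L_k$, which is not needed for the dimension statement but is reused verbatim in the proof of Lemma \ref{degeneracy for V1 ,V2}. Your handling of (ii) is also correct: since $k\mid n$, the element $\sigma^{k}$ has order $2^{\alpha}$, so $V_1$ is a fixed field of the right degree and the same splitting gives $\dim_K V_2=2k-k=k$.
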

 \begin{proof}
     Let $1 \le i \le \alpha - 1$.  As the order of the automorphism $ \sigma^{n/2^i}$ is $ 2^i$ so the fixed field $L_{n/2^i}$ of $ \sigma^{n/2^i}$ has dimension $ n / 2^i$ over $K$.  We can view $\sigma^{n/2^i}$ as a $K$-linear map of $L$. By  the Dedekind independence theorem  the minimal polynomial of $\sigma^{n/2^i}$ is  $x^{2^i} - 1$. Let $ j_i\in L$ be an eigenvector of $ \sigma^{n/2^i}$ corresponding to the eigenvalue $ -1$. It is easily checked that the corresponding eigenspace is $E_i := j_i L_{n/2^i}$.  It follows that $\dim(E_i)= n/2^i$. The proof of (ii) is similar. 
 \end{proof}
 \begin{lem}\label{ degeneracy for E_i}
      Let $n= 2^{\alpha}k$ where $ \alpha \geq 2$ and $k$ is odd.  Suppose that $L$ is a cyclic  extension of a field $K$ of degree $n$ with Galois group $\gal(L/K) = \langle \sigma \rangle$. Then $\forall b_i \in E_i \setminus \{0\} $
      \begin{equation}
          N_{L/L_2}(b_i)= (-1)^{n/2^2} w_i^{2^i},
      \end{equation}
      where $w_i :=  b_i \sigma^{2}(b_i)\cdots \sigma^{n/2^{i} - 2} (b_i)$. Moreover, $f_{b_i,\sigma}$ is degenerate if and only if $\eta_i := \sigma(w_i)/w_i$ is a $ 2^i$-th root of unity in $L$ such that $\sigma(\eta_i)= - {\eta_i}^{-1}$. In particular, $f_{b_1,\sigma}$ is non-degenerate for all $b_1 \in E_1 \setminus \{0\}$.  
 \end{lem}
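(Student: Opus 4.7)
The plan is to first compute the norm $N_{L/L_2}(b_i)$ in closed form, then apply Proposition~\ref{GMdegen-crit} to translate the degeneracy criterion into the statement about $\eta_i$, and finally dispose of the case $i=1$ via a sign obstruction.

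For part~(i) I would expand $N_{L/L_2}(b_i) = \prod_{j=0}^{n/2-1} \sigma^{2j}(b_i)$ and split the index $j$ via the Euclidean division $j = q\cdot (n/2^{i+1}) + r$ with $0\le q<2^i$ and $0\le r<n/2^{i+1}$. Then $\sigma^{2j}(b_i)=\sigma^{2r}\bigl(\sigma^{q\cdot n/2^i}(b_i)\bigr)=(-1)^q\sigma^{2r}(b_i)$ by the defining relation $\sigma^{n/2^i}(b_i)=-b_i$. The inner product over $r$ produces $w_i$, so the full norm becomes $\prod_{q=0}^{2^i-1}(-1)^{q\cdot n/2^{i+1}}\,w_i = (-1)^{(n/2^{i+1})\cdot 2^{i-1}(2^i-1)}\,w_i^{2^i} = (-1)^{n(2^i-1)/4}\,w_i^{2^i}$. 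Since $\alpha\ge 2$ makes $n/4$ an integer and $2^i-1$ is odd, this simplifies to $(-1)^{n/4}w_i^{2^i}$.

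For part~(ii) I would combine the formula of part~(i) with criterion~\eqref{form2}: $f_{b_i,\sigma}$ is degenerate iff $N_{L/L_2}(b_i)\in K$, and since $(-1)^{n/4}\in K$, this is equivalent to $\sigma\bigl(w_i^{2^i}\bigr)=w_i^{2^i}$, i.e.\ $\eta_i^{2^i}=1$. To obtain the additional identity $\sigma(\eta_i)=-\eta_i^{-1}$, I would exploit a telescoping that compares $w_i$ with its $\sigma^2$-shift:
\[
\frac{\sigma^2(w_i)}{w_i} \;=\; \frac{\sigma^2(b_i)\sigma^4(b_i)\cdots \sigma^{n/2^i}(b_i)}{b_i\sigma^2(b_i)\cdots \sigma^{n/2^i - 2}(b_i)}\;=\;\frac{\sigma^{n/2^i}(b_i)}{b_i}\;=\;-1,
\]
so $\sigma^2(w_i)=-w_i$. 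Consequently $\sigma(\eta_i)=\sigma^2(w_i)/\sigma(w_i)=-w_i/\sigma(w_i)=-\eta_i^{-1}$, a relation that actually holds independently of degeneracy.

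For the final assertion, specialise to $i=1$: if $f_{b_1,\sigma}$ were degenerate then $\eta_1^2=1$, so $\eta_1=\pm 1\in K$ and therefore $\sigma(\eta_1)=\eta_1$; but the identity $\sigma(\eta_1)=-\eta_1^{-1}=-\eta_1$ forces $2\eta_1=0$, which is impossible in characteristic not two. Hence $f_{b_1,\sigma}$ is non-degenerate for every $b_1\in E_1\setminus\{0\}$. The main obstacle is the careful bookkeeping of signs in the partitioned norm together with recognising the telescoping that yields $\sigma^2(w_i)=-w_i$; once these two ingredients are in place, the remaining implications in (ii) and (iii) are a matter of algebraic manipulation.
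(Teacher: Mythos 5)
Your proof is correct and follows essentially the same route as the paper: compute $N_{L/L_2}(b_i)$ by collapsing the product using $\sigma^{n/2^i}(b_i)=-b_i$ to get $(-1)^{n/4}w_i^{2^i}$, translate degeneracy via Proposition \ref{GMdegen-crit} into $\eta_i^{2^i}=1$, observe that $\sigma(\eta_i)=-\eta_i^{-1}$ holds unconditionally because $\sigma^2(w_i)=-w_i$, and rule out $i=1$ by the sign contradiction $\eta_1=-\eta_1$ in characteristic $\neq 2$. The only cosmetic difference is that you organize the norm computation by a single Euclidean-division partition of the exponents, where the paper folds the product in half repeatedly along the tower $L\supset L_{n/2}\supset\cdots\supset L_{n/2^{i-1}}$; the sign bookkeeping agrees in both versions.
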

 \begin{proof}
      In view of the chain of inclusions \[ L \supset L_{n/2} \supset \cdots \supset L_{n/2^{i-1}} \supset E_i,\]
       we have for $ b_i \in E_i \setminus \{0\}$ \begin{align*}
           N_{L/L_2}(b_i)&= b_i \sigma^{2}(b_i)\cdots \sigma^{n-2}(b_i)\\
           &= \left( b_{i} \sigma^2(b_i) \cdots \sigma^{n/2 -2 }(b_i)\right) \left( \sigma^{n/2}(b_i) \sigma^{n/2 + 2}(b_i) \cdots \sigma^{n/2 + n/2 -2}(b_i)\right)\\
           &=\left(b_i \sigma^{2}(b_i)\cdots \sigma^{n/2 - 2} (b_i)\right)^{2}\\
           &=\left(b_i \sigma^{2}(b_i)\cdots \sigma^{n/4 - 2} (b_i)\right)^{2^2}\\
           &\ \ \ \ \ \ \ \ \ \ \  \ \ \ \ \ \ \ \ \  \ \ \vdots \\
           &=\left(b_i \sigma^{2}(b_i)\cdots \sigma^{n/2^{i-1} - 2} (b_i)\right)^{2^{i-1}}\\
           &=\left[\left(b_i \sigma^{2}(b_i)\cdots \sigma^{n/2^{i} - 2} (b_i)\right)\left(\sigma^{n/2^{i}}(b_i) \sigma^{n/2^{i} + 2}(b_i)\cdots \sigma^{n/2^{i} +n/{2^{i}} -2}(b_i) \right)\right]^{2^{i-1}}\\
           &= \left[\left(b_i \sigma^{2}(b_i)\cdots \sigma^{n/2^{i} - 2} (b_i)\right)\left((-b_i) (-\sigma^{2}(b_i))\cdots (-\sigma^{n/2^{i} - 2} (b_i))\right)\right]^{2^{i-1}}\\
           &= \left[(-1)^{n/2^{i+1}}\left(b_i \sigma^{2}(b_i)\cdots \sigma^{n/2^{i} - 2} (b_i)\right)^2\right]^{2^{i-1}}\\
           &= (-1)^{n/{2^2}}[b_i \sigma^{2}(b_i)\cdots \sigma^{n/2^{i} - 2} (b_i)]^{2^i}\\
          &=(-1)^{n/2^2} w_i^{2^i}.
       \end{align*} 
       Then 
       \begin{align*}
           \frac {N_{L/L_2}(\sigma(b_i))}{N_{L/L_2}(b_i)}= \left( \frac{(\sigma((-1)^{n/{2^2}}w_i))}{(-1)^{n/{2^2}}w_i}\right)^{2^i}
           = \left(\frac{\sigma(w_i)}{w_i}\right)^{2^i}= \eta_i^{2^i}.
       \end{align*}
       Set $\eta_i := \frac{\sigma(w_i)}{w_i}$.
        By Proposition \ref{GMdegen-crit}, $f_{b_i , \sigma}$ is degenerate if and only if $  \eta_i $ is a $ 2^{i}$-th  root of unity $ \eta_i $. Moreover, \[
           -w_i =\sigma^2(w_i)=\sigma(\eta_i w_i)=\sigma(\eta_i) \eta_i w_i,\]
       whence $ \sigma(\eta_i) \eta_i =-1$, that is,  $ \sigma(\eta_i) = -\eta_i^{-1}$. The last assertion in the theorem is now clear.
 \end{proof}
 \begin{lem}\label{degeneracy for V1 ,V2}
      Let $n= 2^{\alpha}k$ where $ \alpha \geq 2$ and $k$ is odd.  Suppose that $L$ is a cyclic extension of $K$ of degree $n$ with $\gal(L/K) = \langle \sigma \rangle$. Then  $\forall b \in V_1 \cup V_2 $, $f_{b,\sigma}$ is degenerate. 
 \end{lem}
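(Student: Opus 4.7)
The plan is to apply the degeneracy criterion of Proposition \ref{GMdegen-crit}, which tells us that $f_{b,\sigma}$ is degenerate exactly when $N_{L/L_2}(b) = b\sigma^2(b)\cdots\sigma^{n-2}(b)$ lies in $K$. So the entire task reduces to verifying that this particular product is $\sigma$-invariant for every $b \in V_1 \cup V_2$.

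The first observation I would record is a periodicity: for $b \in V_1$ we have $\sigma^k(b)=b$, and for $b \in V_2$ we have $\sigma^k(b)=-b$, which gives $\sigma^{2k}(b)=b$ in either case. Since $n/2 = 2^{\alpha-1}k$ is a multiple of $k$, the sequence $\sigma^{2i}(b)$ is periodic in $i$ with period (dividing) $k$, and grouping the $n/2$ factors of $N_{L/L_2}(b)$ into $2^{\alpha-1}$ identical blocks gives
\[
N_{L/L_2}(b) \;=\; Q^{2^{\alpha-1}}, \qquad \text{where } Q \,:=\, \prod_{i=0}^{k-1}\sigma^{2i}(b).
\]
So it suffices to show that $Q^{2^{\alpha-1}} \in K$.

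Now I would split into the two cases. If $b \in V_1$, then $b \in L_k$, and since $k$ is odd we have $\gcd(2,k)=1$, so $\sigma^2|_{L_k}$ generates $\gal(L_k/K)$. Hence $Q = N_{L_k/K}(b) \in K$ outright and we are done. If $b \in V_2$, then $b \notin L_k$ in general, but $\sigma^k(b^2)=(-b)^2=b^2$, so $b^2 \in L_k$. The same argument as in the $V_1$ case applied to $b^2$ yields
\[
Q^2 \;=\; \prod_{i=0}^{k-1}\sigma^{2i}(b^2) \;=\; N_{L_k/K}(b^2) \;\in\; K.
\]
Since $\alpha \geq 2$, we can write $N_{L/L_2}(b) = Q^{2^{\alpha-1}} = (Q^2)^{2^{\alpha-2}} \in K$, which finishes the proof via Proposition \ref{GMdegen-crit}.

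The calculation is essentially routine once one spots the periodicity, so I do not expect a serious obstacle; the only point that requires some care is the $V_2$ case, where $Q$ itself need not lie in $K$ and one must pass to $Q^2$ and exploit the hypothesis $\alpha \geq 2$ to absorb the extra power of two. If $\alpha$ were $1$, this last step would fail, which is consistent with the fact that Theorem A (the $k$ odd, $\alpha = 1$ case) is handled separately and uses a different analysis.
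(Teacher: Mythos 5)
Your proof is correct and follows essentially the same route as the paper: both invoke Proposition \ref{GMdegen-crit} and collapse $N_{L/L_2}(b)$ into $2^{\alpha-1}$ identical blocks using $\sigma^{2k}(b)=b$, then identify the block with a norm down to $K$. Your handling of $V_2$ by passing to $b^2\in L_k$ and absorbing the extra factor of $2$ via $\alpha\ge 2$ is a slightly cleaner variant of the paper's explicit sign-cancellation, and it also treats the $k=1$ case uniformly rather than separately.
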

 \begin{proof}
      \textsc{ Case I:} Let us first assume that $ k>1$. Then the field $V_1 = L_k$ has dimension $ k $ over $ K$. Again by Dedekind's independence theorem it follows that the minimal polynomial of $\sigma^k$ is $ x^{2^{\alpha}} -1$. Let $ j_\alpha $ be an eigenvector of $ \sigma^k$ corresponding to the eigenvalue $-1$ and it is easily checked that the corresponding eigenspace is $V_2 = j_\alpha L_k$.  Thus $\dim(V_1)=\dim(V_2)=k$. Note that $V_1$ and $V_2$ are $\sigma$-invariant. Again in view of the inclusions \[L \supset L_{n/2} \supset \cdots \supset L_{n/2^{\alpha-1}}=L_{2k} \supset L_k =V_1,\] we have $\forall b \in V_1 \setminus \{0\}$, 
       \begin{align*}
           N_{L/L_2}(b) &= b \sigma^{2}(b) \cdots \sigma^{n-2}(b)\\
                 &=\left(b \sigma^{2}(b) \cdots \sigma^{n/2^{\alpha-1}-2}(b)\right)^{2^{\alpha -1}}\\
                 &=\left( b \sigma^2(b) \cdots \sigma^{2k-2}(b)\right)^{2^{\alpha -1 }}\\
                 &=\left[\left(b \sigma^2(b)\cdots \sigma^{k-1}(b)\right)\left( \sigma^{k+1}(b) \cdots \sigma^{2k-2}(b)\right)\right]^{2^{\alpha - 1}}\\
                 &=\left[\left(b \sigma^2(b) \cdots \sigma^{k-1}(b)\right)\left( \sigma(b) \cdots \sigma^{k-2}(b)\right)\right]^{2^{\alpha -1}}\\
                 &=[b \sigma(b) \sigma^2(b)\cdots \sigma^{k-1}(b)]^{2^{\alpha-1}}\\
                 &= N_{L/L_2}(\sigma(b)).
       \end{align*}
        On the other hand in view of the inclusions \[L \supset L_{n/2} \supset \cdots \supset L_{n/2^{\alpha-1}}=L_{2k} \supset j_{\alpha}L_k =V_2,\] we have  $ \forall b \in V_2 \setminus \{0\}$, 
        \begin{align*}
           N_{L/L_2}(b) &= b \sigma^{2}(b) \cdots \sigma^{n-2}(b)\\
                 &=\left(b \sigma^{2}(b) \cdots \sigma^{n/2^{\alpha-1}-2}(b)\right)^{2^{\alpha -1}}\\
                 &=\left( b \sigma^2(b) \cdots \sigma^{2k-2}(b)\right)^{2^{\alpha -1 }}\\
                 &=\left[\left(b \sigma^2(b)\cdots \sigma^{k-1}(b)\right)\left( \sigma^{k+1}(b) \cdots \sigma^{2k-2}(b)\right)\right]^{2^{\alpha - 1}}\\
                 &=\left[\left(b \sigma^2(b) \cdots \sigma^{k-1}(b)\right) \left( (-\sigma(b)) \cdots (-\sigma^{k-2}(b)\right)\right]^{2^{\alpha -1}}\\
                 &=[b \sigma(b) \sigma^2(b)\cdots \sigma^{k-1}(b)]^{2^{\alpha-1}}\\
                 &= N_{L/L_2}(\sigma(b)).
                 \end{align*}
                 
Consequently $N_{L/L_{2}} (\sigma(b)/b) = 1$ and  
thus by Proposition \ref{GMdegen-crit} $ \forall b \in V_1 \cup V_2$, $ f_{b,\sigma}$ is degenerate.

\textsc{ Case II:}  We now assume that $ k = 1$  (thus $n = 2^\alpha$ and $ L_{2k} = L_2$).  Then $ V_1 := K$ and it is easily checked that  $ V_2  := j_{\alpha}K$, where $ j_{\alpha}$ is an eigenvector of $\sigma$ corresponding to the eigenvalue $-1$, 
       Thus $\dim(V_1)=\dim(V_2)=1$. Clearly if $ b \in L_2^{\times} $ then $ N_{L/L_2}(b) = b^{2^{\alpha-1}}$ and $N_{L/L_2}(\sigma(b)) = (\sigma(b))^{2^{\alpha -1}} $ as $L_2$ is $\sigma$-invariant. By definition if $b \in V_1 \cup V_2$ then $\sigma(b)= \pm b$ and in either case  
       
       \[ N_{L/L_{2}} \biggl (\frac{\sigma(b)}{b} \Biggr ) = { \Biggl (\frac{\sigma(b)}{b} \Biggr )}^{2^{\alpha -1}} = 1.\] Thus  by  Proposition \ref{GMdegen-crit} if $ b \in V_1 \cup V_2$, $ f_{b,\sigma}$ is degenerate.
 \end{proof}
\section{Proofs of Theorems A and B}
\subsection{Proof of Theorem A}
\begin{proof}
     Let $V := L_{k}$ and $0 \ne v \in V$. Clearly \[ \sigma^2(v), \sigma^4(v), \cdots, \sigma^{2k-2}(v) \in V. \]
It follows that \[ N_{L/L_2}(v) \in L_2 \cap V = L_2 \cap L_{k} = K. \] \label{ground field}
 By Proposition \ref{GMdegen-crit}   the skew-form $f_{v, \sigma}$ is degenerate and by Lemma \ref{rank of f b sigma for even case } it  has rank $n-2=2k-2$.

By Lemmas  \ref{GQ degeneracy condition} and \ref{existance a non degenarate} there exists a $j \in L$ such that $f_{j, \sigma}$ is non-degenerate. Then for $0 \ne v \in V$
\[ N_{L/L_2}(jv) = N_{L/L_2}(j)N_{L/L_2}(v) \not \in K.\]
It thus follows by  proposition \ref{GMdegen-crit} that all the nonzero skew-forms $ f_{b,\sigma}$ where $b$  lies in the subspace $U = jV$ (of dimension $k$) are non-degenerate. Clearly $U \cap V = \{0\}$ so $L= U \oplus V$. By Remark \ref{isomorphism} the subspace $U$ of $L $ corresponds to a subspace $\mathcal{U}$ of $\alt_K(L)$ with the same dimension defined by $\mathcal{U}:=\{f_{b,\sigma}: b \in U\}$. Similarly $V$ corresponds to $\mathcal{V} \le \alt_K(L)$ such that $\dim(V)=\dim(\mathcal{V})$. 
Then the decomposition \eqref{odd dcmposn for any field eq} follows.
\end{proof} 
\begin{coro}\label{dcompsn-Ai-for-odd}
    Let $K $ be a field and $n$ be even.  Suppose $L$ is a cyclic Galois extension of a field $K$ of degree $n$ with Galois group $\gal(L/K) = \langle \sigma \rangle$. If $ \ord(\sigma^{i}) \equiv 2~(\mo 4)$  and  $ \ord(\sigma^{i}) \ne 2 $ then   \[A^i  = \mathcal{U}_i \oplus \mathcal{V}_i,\] where  $ \mathcal{U}_i$ is an $n$-subspace  of dimension $n/2 $ and $\mathcal{V}_i$ is an $ (n-2n/ \ord(\sigma^{i}))$-subspace of dimension $n/2$. 
\end{coro}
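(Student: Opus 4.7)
The plan is to reduce the statement to Theorem A applied to the intermediate cyclic extension $L/L_i$. Since $\ord(\sigma^i) \equiv 2 \pmod 4$ and $\ord(\sigma^i) \ne 2$, we can write $\ord(\sigma^i) = 2k'$ with $k' \ge 3$ odd. Now $\sigma^i$ generates $\gal(L/L_i)$ and $[L:L_i] = 2k'$, which exactly matches the hypotheses of Theorem A with base field $L_i$ in place of $K$ and $k'$ in place of $k$. Applying Theorem A in that setting yields a direct decomposition $A^{\sim 1} = \mathcal{U}^{\sim} \oplus \mathcal{V}^{\sim}$, where $\mathcal{U}^{\sim}$ is a $(2k')$-subspace of $L_i$-dimension $k'$ and $\mathcal{V}^{\sim}$ is a $(2k'-2)$-subspace of $L_i$-dimension $k'$.

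Next I would transfer this via the $K$-isomorphism $A^{\sim 1} \cong A^i$ from Remark \ref{crspn}, which sends $f^{\sim}_{b,\sigma^i} \mapsto f_{b,\sigma^i}$, and let $\mathcal{U}_i, \mathcal{V}_i$ be the $K$-subspaces of $A^i$ corresponding to $\mathcal{U}^{\sim}, \mathcal{V}^{\sim}$. By the trace tower $\tr^L_K = \tr^{L_i}_K \circ \tr^L_{L_i}$ one obtains $f_{b,\sigma^i} = \tr^{L_i}_K \circ f^{\sim}_{b,\sigma^i}$, and non-degeneracy of the trace form $\tr^{L_i}_K$ then implies that the left radicals of $f^{\sim}_{b,\sigma^i}$ and $f_{b,\sigma^i}$ coincide as subsets of $L$. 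Consequently
\[
\rk_K(f_{b,\sigma^i}) = [L_i:K]\cdot \rk_{L_i}(f^{\sim}_{b,\sigma^i}) = \frac{n}{\ord(\sigma^i)} \cdot \rk_{L_i}(f^{\sim}_{b,\sigma^i}),
\]
and substituting the constant $L_i$-ranks $2k'$ and $2k'-2$ gives precisely the $K$-ranks $n$ and $n - 2n/\ord(\sigma^i)$ asserted in the corollary. The $K$-dimensions likewise satisfy $\dim_K \mathcal{U}_i = [L_i:K]\cdot k' = n/2 = \dim_K \mathcal{V}_i$. Since $\mathcal{U}^{\sim} \cap \mathcal{V}^{\sim} = 0$ forces $\mathcal{U}_i \cap \mathcal{V}_i = 0$ and these dimensions sum to $n = \dim_K A^i$, the direct decomposition $A^i = \mathcal{U}_i \oplus \mathcal{V}_i$ follows.

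The main obstacle is the rank-conversion step rather than the decomposition itself, since the latter is essentially automatic once the $K$-isomorphism $A^i \cong A^{\sim 1}$ is in hand. For the rank step one fixes $x \in L$, observes that $y \mapsto f^{\sim}_{b,\sigma^i}(x,y)$ is an $L_i$-linear map $L \to L_i$, and then uses $L_i$-scaling of $y$ inside $L$ together with non-degeneracy of $\tr^{L_i}_K$ to identify the left radicals; the factor $[L_i:K] = n/\ord(\sigma^i)$ in the rank formula then comes from $\dim_K = [L_i:K]\dim_{L_i}$ applied to the common radical. The rest is dimension bookkeeping through the chain of isomorphisms.
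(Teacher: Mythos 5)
Your proposal is correct and follows essentially the same route as the paper: apply Theorem A to the cyclic extension $L/L_i$ generated by $\sigma^i$ of order $2k'$ with $k'\ge 3$ odd, and transfer the resulting decomposition back to $A^i$ through the $K$-isomorphism $A^i\cong A^{\sim 1}$ of Remark \ref{crspn}. The only difference is cosmetic: where the paper identifies the two possible $K$-ranks by combining the degeneracy transfer of Remark \ref{crspn} with Lemma \ref{rank of f b sigma for even case }, you instead prove the rank conversion directly via the trace tower and the equality of radicals, i.e.\ $\rk_K(f_{b,\sigma^i})=[L_i:K]\,\rk_{L_i}(f^{\sim}_{b,\sigma^i})$, which is a valid (and slightly more self-contained) justification of the same numbers.
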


\begin{proof}
     This follows from Theorem A, noting Remark \ref{crspn} and the fact (Lemma \ref{rank of f b sigma for even case }) that a skew form in $A^i$ is either non-degenerate or has rank equal to  $n - 2n/\ord(\sigma^i)$.
\end{proof}

Consequently we obtain the following. 

\begin{coro} \label{odd dcmpsn fr any field}
     Let $K$ be a field and $n = 2k$, where  $ k\geq 1$ is odd.  Let $L$ be any cyclic Galois extension of $K$ of degree $n$ with Galois group $G = \langle \sigma \rangle$. Then
     \begin{equation}
  \alt_K(L) =  B^1 \oplus \left(\bigoplus_{\substack{ \ord(\sigma^{i}) ~\equiv~ 0~ (\mo 2)\\\ord(\sigma^{i})\neq 2}}\left(\mathcal{U}_i \bigoplus \mathcal{V}_i \right)\right) \bigoplus \left( \bigoplus_{\ord(\sigma^{i}) ~\equiv~ 1~ (\mo 2)} A^i \right)   
\end{equation}  
\end{coro}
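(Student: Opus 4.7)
The plan is to combine the cyclic form of the decomposition from Theorem 2 with a case-by-case refinement of each summand $A^i$ using Corollary \ref{dcompsn-Ai-for-odd} and the constant-rank statement of Lemma \ref{rank of f b sigma for odd case }. Concretely, I would begin from \eqref{ dcmpstn for cyclic}, namely
\[ \alt_K(L) = B^1 \oplus A^1 \oplus A^2 \oplus \cdots \oplus A^m, \]
so it suffices to describe each $A^i$ further.

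The key combinatorial observation is that, since $n = 2k$ with $k$ odd, every divisor of $n$ is either odd (a divisor of $k$) or of the form $2d$ with $d \mid k$ odd, i.e.\ congruent to $2 \pmod 4$. Because $\ord(\sigma^i) = n/\gc(n,i)$, this means that for every $1 \le i \le m$ the order $\ord(\sigma^i)$ is either odd or $\equiv 2 \pmod 4$. Moreover the unique element of order $2$ in $G$ is $\sigma^{n/2}$, whose contribution has already been separated off as $B^1$; hence $\ord(\sigma^i) \ne 2$ for all the indices $i$ that appear.

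I would then split the remaining indices into two groups. For those $i$ with $\ord(\sigma^i)$ odd, Theorem 2 (equivalently Lemma \ref{rank of f b sigma for odd case }) guarantees that $A^i$ is itself an $(n - n/\ord(\sigma^i))$-subspace, so no further decomposition is needed and the summand simply remains $A^i$. For those $i$ with $\ord(\sigma^i)$ even (hence $\equiv 2 \pmod 4$ and $\ne 2$), I would invoke Corollary \ref{dcompsn-Ai-for-odd} to obtain $A^i = \mathcal{U}_i \oplus \mathcal{V}_i$, where $\mathcal{U}_i$ is an $n$-subspace of dimension $n/2$ and $\mathcal{V}_i$ is an $(n - 2n/\ord(\sigma^i))$-subspace of dimension $n/2$.

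Substituting these refinements back into \eqref{ dcmpstn for cyclic} yields the displayed decomposition. There is no real obstacle here: the only point to check is the bookkeeping that the index set $\{1, \dots, m\}$ is partitioned exactly by the two conditions $\ord(\sigma^i) \equiv 1 \pmod 2$ and $\ord(\sigma^i) \equiv 0 \pmod 2$ with $\ord(\sigma^i) \ne 2$, and this is precisely what the residue analysis above guarantees.
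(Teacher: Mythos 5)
Your proposal is correct and follows essentially the same route as the paper, whose proof simply cites the decomposition \eqref{ dcmpstn for cyclic}, Corollary \ref{dcompsn-Ai-for-odd}, and Lemma \ref{rank of f b sigma for odd case } as making the result clear. Your explicit check that every even order of $\sigma^i$ is necessarily $\equiv 2 \pmod 4$ when $n = 2k$ with $k$ odd is exactly the bookkeeping the paper leaves implicit, and it is what licenses the application of Corollary \ref{dcompsn-Ai-for-odd}.
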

\begin{proof}
     Clear in view of Corollary \ref{dcompsn-Ai-for-odd}, Lemma  \ref{rank of f b sigma for odd case } as well as the decomposition \eqref{ dcmpstn for cyclic}.
\end{proof}
 \begin{rema} \label{n-subs-exists-in}
     Let $n= 2^{\alpha}k$ where $ \alpha \geq 1$ and $k$ is odd.  Suppose that $L$ is a cyclic  extension of a field $K$ of degree $n$ with Galois group $\gal(L/K) = \langle \sigma \rangle$. If $\ord(\sigma^i)$ is even then there always exists an $n$-subspace of dimension $n/2$ inside $A^i$.
    If $ \alpha = 1$ this follows from Corollary \ref{odd dcmpsn fr any field}.
     Otherwise if $ \alpha > 1$ then
     it follows from Lemma \ref{ degeneracy for E_i} that $\mathcal{E}_1: =\{f_{b,\sigma}: b\in E_1\} $ is the desired subspace for $A^1$.  The corresponding assertion for $A^i$ now follows in the light of Remark \ref{crspn}.
 \end{rema}
\subsection{ Proof of Theorem B}
\begin{proof}
    Firstly we will construct a cyclic extension $L$ of $ K $ such that $ i \notin L$ where $ i $ is a primitive $ 2^2$-th  root of unity. Let $p$ be a prime such that $ p \equiv 1 (\text{mod}~ n) $  and consider the cyclotomic extension $\mathbb{Q}(\eta_p)$ where $ \eta_p$ is a primitive $p$-th root of unity. As  is known  (e.g., \cite[Lemma 4]{GQ06}) it is possible to pick the prime $p$  as above such that $ \mathbb{Q}(\eta_p) \cap K(i) =\mathbb{Q} $. 
    Let $L$ be the unique 
 intermediate field  $\mathbb{Q} \subseteq L  \subseteq \mathbb{Q}(\eta_p)$  such that $[L:\mathbb{Q}] =n$. Clearly $ L \cap K(i) =\mathbb{Q}= L \cap K$. By a  well known fact (e.g., \cite[Chapter 6, Theorem 1.12]{SL1993}) the extensions $ LK(i)/K(i)$  and $ LK/K$ are Galois and  
  \[   \gal(LK(i)/K(i)) \cong \gal(L/{L \cap K(i)})= \gal(L/\mathbb{Q})= \gal(L/{L \cap K})\cong \gal(LK/K). \]
  If $i \in LK$ then by the last equation 
 \[ [LK:K]=[LK:K(i)][K(i):K]=[LK(i):K(i)][K(i):K], \] whence $[K(i):K]=1$ thus contradicting the hypothesis on $K$. Redefining $L:=LK$ yields the desired cyclic extension $L/K$ with degree $n$.\newline
    
Let $E_i :=  \{b \in L : \sigma^{n/2^i}(b)= -b\} \ (1 \le i \le \alpha -1)$. By Lemma \ref{Egnspcdecomp} we obtain $ L_{n/2^{i-1}}= L_{n/2^i} \oplus E_i$ and 
$L_{2k} = V_1 \oplus V_2$, where $V_1$ and $V_2$ denote the eigenspaces of $\sigma^k$ with respect to the eigenvalues $1$ and $-1$ respectively. Consequently, we obtain  
 \begin{equation} \label{A1decom}
           L = L_{n/2} \oplus  E_1 
           = L_{n/4} \oplus E_2 \oplus E_1
           = L_{2k} \oplus E_{\alpha - 1} \oplus \cdots \oplus E_1 = V_1 \oplus V_2 \oplus E_{\alpha - 1} \oplus \cdots \oplus E_1.
       \end{equation}      
Let  $\mathcal{E}_i$ be the subspace of $A^1$ corresponding to $E_i :=  \{b \in L : \sigma^{n/2^i}(b)= -b\}$ under the isomorphism of Remark \ref{isomorphism}, that is, $\mathcal{E}_i= \{ f_{b,\sigma}: b \in E_i\}$ ( $1 \leq i \leq \alpha -1$ ). 
    By our construction, the only $2^i$-th roots in $L$ are $\pm 1$. As $\sigma$ fixes both these roots, it follows from Lemma \ref{ degeneracy for E_i} that $\mathcal E_i$ is an $n$-subspace for all $i$ in the above range. 

    Similarly, let $\mathcal V_j$ correspond to the subspace $V_j$ of $L$. By Lemma \ref{degeneracy for V1 ,V2} the nonzero skew-forms in $\mathcal V_j$, where $j = 1,2$ are degenerate whence these are $(n - 2)$-spaces by Lemma \ref{rank of f b sigma for even case }.  The required decomposition \eqref{decmpsn A1 algebraic} is  now immediate from \eqref{A1decom}. 
   \newline
                 \end{proof}
                 \begin{coro}\label{decmpsn Ai algebraic}    
In the situation of Theorem B if $ \ord(\sigma^{i}) \equiv 0~(\mo 4)$, say $\ord(\sigma^{i})= 2^{\beta}k' \ (\beta \ge 2)$ then   
   \begin{equation}
A^{i} = \mathcal{V}_1^i \oplus \mathcal{V}_2^i \oplus \mathcal{E}_1^i \oplus \cdots \mathcal{E}_{\beta - 1}^i,
   \end{equation}
    where
    \begin{itemize}
        \item[(i)] $\mathcal{E}_{k}^{i}$ is an  $n$-subspace of dimension $n/2^i$ for   $  1 \leq k \leq \beta - 1$,
        \item
        [(ii)]  $\mathcal{V}_j^i$ is an $(n-2)$-subspace of dimension $ k'n/{\ord(\sigma^{i})} $ for  $  1 \leq j \leq 2$.
        \end{itemize}
                 \end{coro}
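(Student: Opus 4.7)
The plan is to reduce to Theorem B applied to the intermediate extension $L/L_i$ (where $L_i = L^{\langle \sigma^i \rangle}$) and transport the resulting decomposition back to $A^i$ via the $K$-isomorphism $A^i \cong A^{\sim 1}$ furnished by Remark~\ref{crspn}. The extension $L/L_i$ is cyclic of degree $\ord(\sigma^i) = 2^\beta k'$ with Galois group $\langle \sigma^i \rangle$, so the setup exactly parallels that of Theorem B with $(K,\sigma,n,\alpha,k)$ replaced by $(L_i,\sigma^i, 2^\beta k', \beta, k')$.

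First I would verify that the hypotheses of Theorem B transfer to this new setting. Since $L_i$ lies between $\mathbb{Q}$ and the number field $L$ constructed in the proof of Theorem B, $L_i$ is itself a number field; and since that construction was arranged so that $\sqrt{-1} \notin L$, the stronger fact that the only $2^j$-th roots of unity in $L$ are $\pm 1$ (for every $j \ge 1$) holds automatically, which is precisely the ingredient that Theorem B's proof uses to apply Lemma~\ref{ degeneracy for E_i}.

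Next I would replay the argument of Theorem B verbatim in this new context. This yields the $L_i$-direct sum decomposition
\[
L = \tilde V_1 \oplus \tilde V_2 \oplus \tilde E_{\beta-1} \oplus \cdots \oplus \tilde E_1,
\]
where $\tilde E_j := \{b \in L : (\sigma^i)^{2^{\beta-j} k'}(b) = -b\}$ has $L_i$-dimension $2^{\beta-j}k'$, and $\tilde V_1, \tilde V_2$ are the $\pm 1$-eigenspaces of $(\sigma^i)^{k'}$ on $L^{\langle \sigma^{ik'} \rangle}$, each of $L_i$-dimension $k'$. By the analogs of Lemmas~\ref{ degeneracy for E_i} and~\ref{degeneracy for V1 ,V2}, for nonzero $b \in \tilde E_j$ the $L_i$-skew-form $f^\sim_{b,\sigma^i}$ is non-degenerate, while for nonzero $b \in \tilde V_1 \cup \tilde V_2$ it is degenerate.

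Finally, by Remark~\ref{crspn} these properties transfer to the corresponding $K$-subspaces $\mathcal E_k^i, \mathcal V_j^i \subseteq A^i$: $L_i$-non-degeneracy becomes $K$-rank $n$, while $L_i$-degeneracy becomes $K$-rank $n - 2n/\ord(\sigma^i)$ by Lemma~\ref{rank of f b sigma for even case }. The $K$-dimensions follow from $\dim_K = [L_i : K] \cdot \dim_{L_i}$, giving $\dim_K \mathcal E_k^i = (n/\ord(\sigma^i)) \cdot 2^{\beta-k} k' = n/2^k$ and $\dim_K \mathcal V_j^i = (n/\ord(\sigma^i)) \cdot k' = k' n/\ord(\sigma^i)$. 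The main point to verify carefully is that Theorem B's argument uses the ambient field $L$ only through the location of $2^j$-th roots of unity, so it adapts cleanly from $L/K$ to $L/L_i$.
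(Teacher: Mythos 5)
Your proposal is correct and follows essentially the same route as the paper, whose proof is a one-line reduction to the proof of Theorem B via Remark \ref{crspn} together with Lemma \ref{rank of f b sigma for even case }; you have simply filled in the details (hypothesis transfer to $L/L_i$, the $L_i$-eigenspace decomposition, and the dimension count $\dim_K = [L_i:K]\cdot\dim_{L_i}$) that the paper leaves implicit. Your observation that the degenerate forms in $A^i$ have rank $n - 2n/\ord(\sigma^i)$ rather than $n-2$ agrees with the paper's own proof and correctly flags what appears to be a slip in the corollary's statement.
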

                 \begin{proof}
                      This follows from proof of Theorem B, noting Remark \ref{chng2Ai} and the fact (Lemma \ref{rank of f b sigma for even case }) that a skew form in $A^i$ is either non-degenerate or has rank equal to  $n - 2n/\ord(\sigma^i)$.
                 \end{proof}
                 \begin{coro}\label{algebraic dcmpsn Alt L}
                     In the situation of Theorem B there is direct-decomposition
                     \begin{equation}
                      \begin{aligned}[b]
            \alt_K(L) = &  B^1 \bigoplus \left(\bigoplus_{\substack{ \ord(\sigma^{i}) ~\equiv~ 2~ (\mo 4)\\\ord(\sigma^{i})\neq 2}} \left(\mathcal{U}_i \bigoplus \mathcal{V}_i \right)\right) \bigoplus \left( \bigoplus_{\ord(\sigma^{i}) ~\equiv~ 1~ (\mo 2)} A^i \right)  \\ &\bigoplus_{ \ord(\sigma^{i}) ~\equiv~ 0~ (\mo 4)}\left(\mathcal{V}_1^i \bigoplus \mathcal{V}_2^i \bigoplus \mathcal{E}_{\beta - 1}^i \bigoplus \cdots \bigoplus\mathcal{E}_1^i\right) 
            \end{aligned}
            \end{equation}
                 \end{coro}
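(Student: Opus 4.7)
The plan is to combine the basic Galois-theoretic decomposition \eqref{ dcmpstn for cyclic} with the constant-rank refinements of each summand $A^i$ that have been established earlier in this section. Specifically, I would start by writing
\[\alt_K(L) = B^1 \oplus A^1 \oplus A^2 \oplus \cdots \oplus A^m\]
and then partition the index set $\{1, \dots, m\}$ according to the value of $\ord(\sigma^i)$ modulo $4$. This yields three disjoint classes: the odd-order indices, the indices with $\ord(\sigma^i) \equiv 2 \pmod 4$ and $\ord(\sigma^i) \ne 2$, and the indices with $\ord(\sigma^i) \equiv 0 \pmod 4$. The remaining case $\ord(\sigma^i) = 2$ occurs only for $i = n/2$, and that contribution is already accounted for separately as $B^1$.

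Next I would treat each class by appealing to the appropriate earlier result. For $\ord(\sigma^i)$ odd, Lemma \ref{rank of f b sigma for odd case } asserts that $A^i$ itself is an $(n - n/\ord(\sigma^i))$-subspace and so requires no further refinement. For $\ord(\sigma^i) \equiv 2 \pmod 4$ with $\ord(\sigma^i) \ne 2$, Corollary \ref{dcompsn-Ai-for-odd} gives $A^i = \mathcal{U}_i \oplus \mathcal{V}_i$ with the stated ranks and dimensions. For $\ord(\sigma^i) \equiv 0 \pmod 4$, say $\ord(\sigma^i) = 2^{\beta}k'$ with $\beta \ge 2$, Corollary \ref{decmpsn Ai algebraic} produces the finer decomposition $A^i = \mathcal{V}_1^i \oplus \mathcal{V}_2^i \oplus \mathcal{E}_1^i \oplus \cdots \oplus \mathcal{E}_{\beta - 1}^i$.

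Substituting each of these refinements for the corresponding block $A^i$ in the fundamental decomposition, and then reordering the summands, yields the announced identity. Directness of the global sum is automatic from the directness of \eqref{ dcmpstn for cyclic} together with the directness asserted inside each refinement, so no intersection checks are needed. There is essentially no genuine obstacle here; this corollary is pure bookkeeping once the three classifying results (Lemma \ref{rank of f b sigma for odd case }, Corollary \ref{dcompsn-Ai-for-odd}, and Corollary \ref{decmpsn Ai algebraic}) are in hand. The only point deserving a brief remark is that the hypothesis $-1 \notin K^2$ underpinning Corollary \ref{decmpsn Ai algebraic} is the standing assumption of Theorem B, so that corollary is applicable to every $A^i$ with $\ord(\sigma^i) \equiv 0 \pmod 4$ in the cyclic extension $L/K$ constructed in the proof of Theorem B.
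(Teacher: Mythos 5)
Your proposal is correct and follows exactly the same route as the paper: the paper's proof likewise combines the decomposition \eqref{ dcmpstn for cyclic} with Lemma \ref{rank of f b sigma for odd case }, Corollary \ref{dcompsn-Ai-for-odd}, and Corollary \ref{decmpsn Ai algebraic}, sorted by the residue of $\ord(\sigma^i)$ modulo $4$. Your version simply spells out the bookkeeping that the paper leaves implicit.
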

                 \begin{proof}
                    Using Corollaries \ref{dcompsn-Ai-for-odd}, \ref{decmpsn Ai algebraic}   and Lemma \ref{rank of f b sigma for odd case }  as well as the decomposition \eqref{ dcmpstn for cyclic}, we can deduce the  required decomposition. 
                 \end{proof}
                 \begin{rema}\label{theorem B rema}
                     As its proof shows, Theorem B as well as its corollaries  remain valid for an arbitrary cyclic extension $L/K$ of degree $n = 2^{\alpha}k$ ($\alpha \ge 2$) such that $-1$ is not a square in $L$. Similarly, let $K$ be a field such that $f(X): = X^{4} + 1 $ is irreducible in $K[X]$ (it is not difficult to show that $K$ has this property if and only if none of $ -1 ,2$ and $-2$ is a square in $K$). Then Theorem $B$ holds true for any cyclic extension $L/K$ of degree $n = 2^\alpha k$. Indeed, if $\eta_i$ is a $2^i$-root of unity for $i \ge 1$ then the conditions $-1 \not \in K^2$ and  $\sigma(\eta_i) = -\eta_i^{-1}$ mean that $\eta_i \not \in \{-\pm 1, \pm i \}$, where $i$ denotes a primitive $4$-th root of unity in $L$. Thus $\eta$ must have order $2^s$ where $s \ge 3$. Since $\eta \in L_2$ this would mean that $L_2$ contains an element of order $8$ and thus a root of $f$ implying $f$ has a quadratic factor in $K[X]$. 
                     
     
\end{rema}
\section{Proofs of Theorems C and D}
\subsection{Proof of Theorem C}
\begin{proof}
   Let $ E_i$ ( $1 \leq i \leq \alpha -1$ ) and $ V_j$ ( $1 \leq j \leq 2$ ) be  as in Lemma \ref{Egnspcdecomp}. As in the proof of Theorem B, we have \begin{equation*} 
           L =  V_1 \oplus V_2 \oplus E_{\alpha - 1} \oplus \cdots \oplus E_1.
       \end{equation*}  
   By the hypothesis $-1$ is not a square in $K$ from which it easily follows  that $a \geq 2$. Let $ w_i$ and $ \eta_i$ be as in Lemma \ref{ degeneracy for E_i}. Note that $ \sigma_f^2(w_i) = -w_i$ and thus  $w_i^2\in L_2$ but $ w_i \notin L_2$. Consequently $ w_i^{2(q^2 -1)} = 1$ and $w_i^{(q^2 -1)}= -1$.   Since $\sigma_f(w_i) = w_i^q$ hence $ \eta_i = w_i^{q-1}$.  
       It follows that $ \eta_i$ is a $2(q+1)$-th root of unity but not a $ (q+1)$-th root of unity.
       \newline
       (1) Suppose $ \alpha \leq a+1$. Since $1 \leq i\leq \alpha -1$ therefore $1\leq i \leq a$. Again  by Lemma \ref{ degeneracy for E_i},   $f_{b_i,\sigma_f}$ is degenerate if and only if $ \eta_i $ is a $2^{i}$-th root of unity. Since $ i \leq  a$, this would mean that $\eta_i^{q+1}= \eta_i^{2^a l}=1$, a contradiction. Let $\mathcal{E}_i$ be the subspace of $ A^1$ corresponding to $ E_i$ under the isomorphism of Remark \ref{crspn}. It follows that  $\mathcal{E}_i$ is an  $n$-subspace of dimension $n/2^i$.
       \newline
       (2) Suppose $\alpha > a+1$. Pick $ i \in [1,\alpha -1] $.  If $ 1 \leq i\leq a$ it follows from part (1) above that $ E_i$ is an $n$-subspace for $1 \leq i \leq a$. So we assume that $i \geq a+1$. 
        By the hypothesis $l = 1$, whence   $ \eta_i^{2^{a+1}}= \eta_i^{2(q+1)}=1$.  It follows that if $ a+1 \leq i \leq \alpha -1 $  then $  \eta_i^{2^i}=1$.  Thus in view of Lemma \ref{ degeneracy for E_i} all the skew-forms in $ \mathcal{E}_i $ are degenerate and in this case by Lemma \ref{rank of f b sigma for even case }, $\mathcal{E}_i$ is an $(n-2)$-subspace.
        \newline
          Similarly let $ \mathcal{V}_j$ be the subspace of $ A^1$ corresponding to $V_j$. Then by Lemmas \ref{degeneracy for V1 ,V2} and  \ref{rank of f b sigma for even case }, $\mathcal{V}_j$ is an $(n-2)$-subspace.
        \end{proof}
        \begin{rema}
        In Theorem C when $ \alpha > a+1$ and $ l > 1$ then $ \mathcal{E}_i$ is neither an $ n$-subspace nor an $( n-2)$-subspace for $a+1 \leq i \leq \alpha -1$. Indeed, by the definition  of $E_i$
       \begin{align*}
           E_{i} = \{ b\in L : \sigma_f^{n/2^i}(b) = -b \}
           = \{ b\in L : b^{q^{n/2^i} - 1} = -1 \}.
       \end{align*}
       Let $C:= \{b\in L^{\times}: b^{2( q^{n/2^i}-1)}=1\}$. Then $ C $ is a cyclic subgroup of $L^{\times}$. Clearly, $ C = L_{n/2^i}^{\times} \bigcupdot ( E_{i}
       \setminus \{0\} )$. Let $u$ be a generator of  $ C$. It is clear that  $b_i = u^s \in E_i$ if and only if $s$ is  odd. We claim that $f_{b_i, \sigma_f}$ is degenerate if and only if $s$ is an odd multiple of $l$.           
       Indeed,  let $ w_i$ and $ \eta_i$ be as in Lemma \ref{ degeneracy for E_i}. 
Then
       \[
           w_i = b_i \sigma_f^2(b_i) \cdots \sigma_f^{n/2^i  -2}(b_i)
           = b_i b_i^{q^2}\cdots b_i^{q^{n/2^i -2}}
            = b_i^{\frac{q^{n/2^{i}} -1}{q^{2} -1}},
            \]
             and \[ \eta_i =  w_i^{q-1} = b_i^{\frac{q^{n/2^{i}} -1}{q +1}} = b_i^t,\] where $ t:= {\frac{q^{n/2^{i}} -1}{q +1}} $.
              By Lemma \ref{ degeneracy for E_i}, $f_{b_i, \sigma}$ is degenerate if and only if $\eta_i^{2^i}=1$. Now from the proof of Theorem C, $\eta_i^{2^{a+1}l}=\eta_i^{2(q+1)}=1$ and $\eta_i^{2^{a}l}=\eta^{(q+1)}\neq1$. Consequently $ f_{b_i,\sigma_f}$ is degenerate if and only if $ \eta_i$ is a  primitive $ 2^{a+1}$-th  root of unity, that is,  if and only if, 
              \begin{equation}
               2^{a+ 1} =  \ord(\eta_i) = \ord(u^{st})
                  = \frac{\ord(u)}{\gc(\ord(u), st)}\\
                  = \frac{2(q+1)t}{\gc(2(q+1)t,st)}
                  = \frac{2^{a+1}l}{\gc (2^{a+1}l, s)}, 
              \end{equation}
              or, $\gcd(2^{a+1}l,s) = l$.  In other words,  for $b_i = u^s  \in E_i $, $ f_{b_i,\sigma_f}$ is degenerate if and only if  $s$ is an odd multiple of $l$. Thus, for example,  $ f_{u^l, \sigma_f}$ is degenerate while  $ f_{u, \sigma_f}$ is non-degenerate.
               \end{rema}           
    \subsection{Proof of Theorem D}
  \begin{proof}
      By \cite[Proposition 5.4.11]{pg} for every  $ n $ there exists exactly one unramified extension  $ L $ of $ K=\mathbb{Q}_p $ of degree $ n $ obtained by adjoining a  primitive $ (p^n -1)$-th  root of unity, say $ \theta$. Moreover  according to \cite[Corollary 2]{number}, the extension $L/K$ constitutes a cyclic  extension such that $\gal(L/K) = \langle\sigma \rangle$ where  $\sigma$ is 
 defined by $\sigma(\theta) = \theta^{p} $. Since $ -1$ is not a square in $K$ so $p=2^al-1 \equiv 3~ (\mo 4 )$ by \cite[Proposition 3.4.2]{pg} and thus $a\geq 2$. 

  Let $ E_{i}:= \{ b\in L : \sigma^{\frac{n}{2^i}}(b) = -b\}$ where $1 \leq i \leq \alpha - 1$. The  hypothesis $ \alpha \leq a+1$ means that $1 \leq i \leq a$. Let $w_i,\eta_i$ be as in Lemma \ref{ degeneracy for E_i}. 
Again by Lemma \ref{ degeneracy for E_i}, $f_{b_i,\sigma} \ (b_i \in E_i) $ is degenerate if and only if $\eta_i$ is $ 2^i$-th  root of unity such that  $\sigma(\eta_i)=-\eta_i^{-1}$. As $2^i \mid 2^a \mid p + 1 \mid p^n -1$, this would mean that $\langle \eta_i \rangle  \le \langle \theta \rangle$ and consequently,  $\sigma(\eta_i)= \eta_i^p$. But then \[\sigma(\eta_i) \eta_i = \eta_i^{p+1} = \eta_i^{2^{a}l}=1.\]  It follows that  $ f_{b_i ,\sigma } $ is non-degenerate. Hence $ \mathcal{E}_i$ is an $n$-subspace, where $ \mathcal{E}_i$ is the subspace of $ A^1$ corresponding to $E_i$. \newline  Similarly let $ \mathcal{V}_j$ ( $1 \leq j \leq 2$ ) be the subspace of $ A^1$ corresponding to $V_j$. Then by Lemmas \ref{degeneracy for V1 ,V2} and  \ref{rank of f b sigma for even case }, $\mathcal{V}_j$ is an $(n-2)$-subspace. The 
 theorem now follows.
\begin{rema}
    In the situation of Theorem D for $p=2$ the decomposition \eqref{algebraic dcmpsn Alt L} holds true in view of Remark \ref{theorem B rema}.
\end{rema}

  \end{proof}
  \section{ A $3$-dimensional $4$-subspace in $\alt_4(\mathbb Q)$}
  Let $K := \mathbb{Q}$ and $L$ be the cyclotomic field $\mathbb{Q} (\eta)$ where $\eta$ is a primitive $5$-th root of unity in $\mathbb C$. Then $L/K$ is a cyclic extension of degree $4$. We will show that  the maximum dimension of a $4$-subspace inside $ A^1 $ is $ 3$. Let $ b  = x + y\eta +z\eta^{2} + w\eta^{3} \in L $, where $ x,y,z,w \in \mathbb Q$. We take the automorphism $ \sigma$ defined by $ \sigma(\eta)= \eta^3$ as a generator of $\gal(L/K)$. Using  the theory of Gauss periods we may find the basis, namely, $ \{ 1, \eta^2 +\eta^3 \} $ for $L_{2}/\mathbb{Q}$. By  Proposition \ref{GMdegen-crit},  $ f_{b,\sigma} $ is degenerate if and only if   $ N_{L/L_{2}}(b)  \in \mathbb{Q}$, that is, the coefficient of $ \eta^2 +\eta^3 $ in $ N_{L/L_{2}}(b) $  is zero. It is straight-forward to check that this coefficient is $ -xy+ xz +xw - yz + yw - zw $.
  In this situation we thus obtain the following.
  \begin{prop}
      \label{totally isotropic}
	The maximum dimension of a $ 4 $-subspace inside $ A^{1} $  equals to a maximum dimension of a totally anisotropic subspace of $ L $  with respect to the 
 following quadratic form  \[ \mathcal{Q}(x,y,z,w)=xy -xz - xw+ yz - yw + zw .\] 
  \end{prop}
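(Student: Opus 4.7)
The plan is to observe that the proposition is essentially a translation of the degeneracy criterion (Proposition \ref{GMdegen-crit}) into the quadratic language already set up in the text. Everything follows once the two sides of the claimed equality are identified via the $K$-isomorphism $L \xrightarrow{\sim} A^{1}$ of Remark \ref{isomorphism}.

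First I would unpack the degeneracy condition. For $b = x+y\eta+z\eta^{2}+w\eta^{3}\in L$, Proposition \ref{GMdegen-crit} says that $f_{b,\sigma}$ is degenerate if and only if $N_{L/L_{2}}(b)\in\mathbb{Q}$. Since $\{1,\eta^{2}+\eta^{3}\}$ is a $\mathbb{Q}$-basis of $L_{2}$, the norm lies in $\mathbb Q$ exactly when the coefficient of $\eta^{2}+\eta^{3}$ in $N_{L/L_{2}}(b)$ vanishes. By the computation already recorded just before the statement, that coefficient equals $-xy+xz+xw-yz+yw-zw$, so
\[
 f_{b,\sigma} \text{ is degenerate} \iff \mathcal{Q}(x,y,z,w)=0,
\]
that is, iff the vector $(x,y,z,w)$ is isotropic for $\mathcal Q$. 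Equivalently, $f_{b,\sigma}$ is non-degenerate iff $b$ is anisotropic.

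Next I would use Lemma \ref{rank of f b sigma for even case } to identify ``non-degenerate'' with ``a $4$-form''. Here $\sigma$ has order $4$, so any nonzero $f_{b,\sigma}\in A^{1}$ has rank either $4 - 4/2 = 2$ or $4$; hence $f_{b,\sigma}$ is non-degenerate precisely when it has rank $4$. Consequently a subspace $\mathcal W\le A^{1}$ is a $4$-subspace iff every nonzero $f_{b,\sigma}\in\mathcal W$ is non-degenerate.

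Finally, by Remark \ref{isomorphism}, the map $b\mapsto f_{b,\sigma}$ is a $K$-linear isomorphism $L\to A^{1}$, so subspaces of $L$ correspond bijectively to subspaces of $A^{1}$ of the same dimension. Under this correspondence, the first step shows that a subspace $W\le L$ is totally anisotropic for $\mathcal Q$ iff the associated subspace $\mathcal W\le A^{1}$ has all its nonzero skew-forms non-degenerate, iff $\mathcal W$ is a $4$-subspace. Taking maxima over both sides yields the claimed equality of maximum dimensions. There is no real obstacle here: the proposition is a direct dictionary translation, and the only substantive input is the already-executed computation of the coefficient of $\eta^{2}+\eta^{3}$ in $N_{L/L_{2}}(b)$.
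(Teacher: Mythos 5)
Your proposal is correct and is exactly the argument the paper intends: the paper's proof is simply ``Clear,'' relying on the degeneracy computation and the rank dichotomy set up immediately beforehand, and you have spelled out precisely those steps (degeneracy $\Leftrightarrow$ vanishing of the coefficient of $\eta^{2}+\eta^{3}$, non-degeneracy $\Leftrightarrow$ rank $4$ via Lemma \ref{rank of f b sigma for even case }, and the dimension-preserving correspondence $b\mapsto f_{b,\sigma}$ of Remark \ref{isomorphism}). The only cosmetic point is that $\mathcal{Q}$ is the negative of the computed coefficient, which of course has the same zero set.
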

  \begin{proof}
    Clear.
\end{proof}
\begin{thm}

 \textbf{(Legendre's Theorem)}\label{legendre}\emph{(\cite[Theorem 1, Chapter 5]{EG})} Suppose $ a,b,c \in \mathbb{Z}$ are such that $ abc$ is a non-zero square-free integer. Then the equation $ a X^2 + b Y^2 + c Z^2 = 0 $ has a non-trivial $Z$-solution if and only if $(i)$ $a,b,c$ do not all have the same sign; $(iia)$ $-bc $ is a square modulo $|{a}|$, $(iib)$ $-ac $ is a square modulo $|{b}|$ and $(iic)$ $-ab $ is a square modulo $|{c}|$.
\end{thm}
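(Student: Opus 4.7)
The plan is to prove the two directions separately, with necessity being straightforward and sufficiency requiring Legendre's classical descent argument.

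\textbf{Necessity.} Take a primitive integer solution $(X_0, Y_0, Z_0)$, meaning $\gcd(X_0, Y_0, Z_0) = 1$. The sign condition $(i)$ is immediate, since $aX^2 + bY^2 + cZ^2$ cannot vanish at a nonzero triple if all three coefficients share a sign. For $(iia)$, observe that $abc$ being square-free forces $a, b, c$ to be pairwise coprime. Reducing modulo $|a|$ gives $bY_0^2 + cZ_0^2 \equiv 0 \pmod{|a|}$. A short primitivity argument shows $\gcd(Z_0, a) = 1$: if a prime $p$ divided both $Z_0$ and $a$, then $p \mid bY_0^2$, forcing $p \mid Y_0$ (as $\gcd(p,b)=1$); hence $p^2 \mid aX_0^2$, and the square-freeness of $a$ would yield $p \mid X_0$, contradicting primitivity. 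Multiplying by $b$ and using invertibility of $Z_0$ modulo $|a|$ then produces $-bc \equiv (bY_0 Z_0^{-1})^2 \pmod{|a|}$. Conditions $(iib)$ and $(iic)$ follow by symmetry.

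\textbf{Sufficiency.} I would induct on $|a| + |b| + |c|$, assuming without loss of generality $|a| \le |b| \le |c|$. For the base case $|a| = |b| = |c| = 1$, the sign condition forces the form to be equivalent to $X^2 + Y^2 - Z^2$, which visibly has nontrivial zeros. For the inductive step, use $(iic)$ to write $-ab \equiv t^2 \pmod{|c|}$ with $|t| \le |c|/2$, so that $t^2 + ab = cc_1$ for some integer $c_1$ with $|c_1| < |c|$. After stripping square factors from $c_1$ to obtain a square-free $c'$, one checks that the ternary form $aX^2 + bY^2 + c'Z^2$ has strictly smaller height, still satisfies $(i)$ by preservation of signs, and inherits the three square-residue conditions; the inductive hypothesis then produces a nontrivial solution to the reduced form. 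Lifting back to a solution of the original $aX^2 + bY^2 + cZ^2 = 0$ is accomplished by an explicit algebraic identity arising from $t^2 + ab = cc_1$: one combines the new solution with the factorization to produce rational zeros, then clears denominators.

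\textbf{Main obstacle.} The delicate step is verifying that the reduced ternary form still satisfies the hypotheses of the theorem. One must confirm that the coefficients remain pairwise coprime, that $c'$ is square-free after absorption, and, most importantly, that the three quadratic-residue conditions propagate through the reduction. This requires systematic bookkeeping with the Chinese Remainder Theorem and the multiplicativity of the Jacobi symbol, and handling the bad primes (those dividing $c_1/c'$) is the place where the argument most easily goes wrong. A conceptually cleaner, though technically heavier, alternative would be to invoke the Hasse--Minkowski principle: local solvability at infinity corresponds exactly to $(i)$, and local solvability at each finite prime $p$ corresponds (for square-free coefficients) precisely to $(iia)$--$(iic)$, so the theorem becomes a direct specialization of the local-global principle for quadratic forms in three variables.
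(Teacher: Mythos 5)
The paper does not prove this statement: Legendre's theorem is imported verbatim from the cited reference \cite[Theorem 1, Chapter 5]{EG} and used only as a black box in the proof of Theorem 5.2 (to show $c_1^2+c_2^2-6c_3^2=0$ has no nontrivial integer solution). So there is no in-paper argument to compare against; what can be judged is whether your proposal would actually constitute a proof. Your \emph{necessity} direction is complete and correct: passing to a primitive solution, the coprimality of $Z_0$ with $a$ is argued properly using square-freeness, and the congruence $-bc\equiv(bY_0Z_0^{-1})^2\pmod{|a|}$ follows. This half is fine as written.

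The \emph{sufficiency} direction, however, is only an outline, and the two load-bearing claims are exactly the ones left unverified. First, the height decrease: from $-ab\equiv t^2\pmod{|c|}$ with $|t|\le|c|/2$ you get $|c_1|=|t^2+ab|/|c|\le |c|/4+|ab|/|c|$, and with the normalization $|a|\le|b|\le|c|$ this only yields $|c_1|\le |c|/4+|b|$, which need not be smaller than $|c|$ when $|b|$ is close to $|c|$. The classical proofs close this gap with extra case analysis or a different descent quantity; asserting "strictly smaller height" without this is a genuine hole, not mere bookkeeping. Second, the propagation of conditions $(i)$, $(iia)$--$(iic)$ to $aX^2+bY^2+c'Z^2$ (including square-freeness and pairwise coprimality of the new coefficients, and the behaviour at the primes absorbed into the square factor of $c_1$) is precisely where Legendre's argument spends most of its effort, and you have flagged it rather than done it. Your Hasse--Minkowski alternative is a legitimate complete route, but it too needs one more observation than you state: conditions $(iia)$--$(iic)$ give solvability only at the odd primes dividing $abc$; solvability at primes not dividing $2abc$ is automatic from the unit determinant, and solvability at $2$ must be extracted from Hilbert reciprocity rather than from the stated hypotheses. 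As it stands, the proposal is a correct plan for the standard proof with the necessity half done, but it does not yet prove sufficiency.
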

\begin{thm}\label{counter}
    The maximum dimension of a $ 4 $-subspace in $ A^{1} $ is $ 3 $.
\end{thm}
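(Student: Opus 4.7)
The plan is to apply Proposition \ref{totally isotropic} to reduce the statement to computing the maximum dimension of a totally anisotropic subspace of $L \cong \mathbb{Q}^4$ with respect to the quadratic form
\[ \mathcal{Q}(x,y,z,w) = xy - xz - xw + yz - yw + zw. \]
For the upper bound, the identity $\mathcal{Q}(1,1,1,1) = 1 - 1 - 1 + 1 - 1 + 1 = 0$ exhibits a nonzero isotropic vector, so no $4$-dimensional totally anisotropic subspace of $L$ exists; hence the maximum is at most $3$.

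For the lower bound I would diagonalize $\mathcal{Q}$ over $\mathbb{Q}$ via an explicit substitution. Writing $x = u+v$, $y = u-v$, $z = p+q$, $w = p-q$ one checks that
\[ \mathcal{Q} = u^{2} - v^{2} - 2uw - 2vz + zw, \]
and then completing squares in $u$ and $v$ (and grouping the remaining $p,q$ terms as $(p+2q)^{2} - 5q^{2}$) should yield
\[ \mathcal{Q} = (u - p + q)^{2} - (v + p + q)^{2} + (p + 2q)^{2} - 5q^{2}. \]
Renaming the new coordinates $U, V, P, Q$ this exhibits $\mathcal{Q}$ as $\mathbb{Q}$-equivalent to $\langle 1, -1, 1, -5 \rangle$.

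Inside the diagonalized form I would then single out the three-dimensional subspace parametrized by $(U, V, P, Q) = (2t, t, s, r)$ with $t, s, r \in \mathbb{Q}$; on it $\mathcal{Q}$ restricts to the ternary form $4t^{2} - t^{2} + s^{2} - 5r^{2} = 3t^{2} + s^{2} - 5r^{2}$. To verify anisotropy I would invoke Legendre's Theorem \ref{legendre} with $(a, b, c) = (3, 1, -5)$: the product $abc = -15$ is squarefree, the coefficients have mixed signs, but the condition that $-bc = 5$ be a square modulo $|a| = 3$ fails because $5 \equiv 2 \pmod{3}$ and $2$ is a non-residue modulo $3$. Hence $3t^{2} + s^{2} - 5r^{2} = 0$ admits no non-trivial rational solution, the chosen subspace is totally anisotropic, and pulling back through the change of variables produces an explicit $3$-dimensional anisotropic subspace of $L$.

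The main obstacle is the careful bookkeeping of the change of variables that delivers the clean diagonal form $\langle 1, -1, 1, -5 \rangle$; once that is in hand, the selection of a ternary slice of signature $(2,1)$ in which one of the absolute values of the coefficients is $3$ is the natural way to exploit Legendre's theorem, since $5$ fails to be a square modulo $3$ for the cheapest possible local reason.
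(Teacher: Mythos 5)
Your proof is correct and follows essentially the same route as the paper: both reduce via Proposition \ref{totally isotropic} to exhibiting a three-dimensional totally anisotropic subspace for $\mathcal{Q}$ and certify anisotropy of the resulting ternary form with Legendre's theorem --- the paper restricts to the explicit subspace spanned by $\eta+\eta^{2}$, $-1+\eta^{3}$, $1+\eta$ and diagonalizes to $c_{1}^{2}+c_{2}^{2}-6c_{3}^{2}$, whereas you diagonalize the full form to $\langle 1,-1,1,-5\rangle$ and slice to $\langle 3,1,-5\rangle$. Your diagonalization and Legendre check both verify, and you additionally make explicit the upper bound via the isotropic vector $(1,1,1,1)$, which the paper leaves implicit.
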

\begin{proof}
	 Let $ U $ be the $\mathbb Q$-subspace of $ L $ spanned by $ \{\eta + \eta^{2} , -1 + \eta^{3},1+\eta\} $. Let $ b = c_{1} (\eta + \eta^{2}) + c_2(-1 + \eta^{3}) + c_3(1+\eta) $. We claim that $\mathcal W :=   \{f_{b,\sigma} : b\in U \} \le A^1$ is the desired $4$-subspace. Indeed, according to  proposition \ref{totally isotropic} we need to show that the quadratic form  $$ \mathcal{Q}(c_1,c_2,c_3)= c_{1}^2  +c_{2}^{2} + c_{3}^{2} +c_1 c_{3} - 3 c_{2}c_{3} $$ has no non-trivial integer solution. It can be checked that  $\mathcal{Q}$  reduces to it's diagonal form  $$ \mathcal{Q'}= c_{1}^{2} + c_{2}^{2} - 6 c_{3}^{2}  .$$ To complete the proof, it suffices to show that $\mathcal{Q'}$ has no non-trivial integer solutions. Based on Theorem \ref{legendre} it is evident  that $\mathcal{Q'}$ has no non-trivial integer solutions since $-ab= -1$ is not square modulo $|c|=6$.
	\end{proof}
  \section{Conclusion}
Eigenspaces of the elements of the Galois group yield constant rank subspaces in $\alt_K(L)$.  We can always find an $ n $-subspace of dimension $ n/2$ in $A^i$ for an arbitrary field $K$ (Remark \ref{n-subs-exists-in}).  However, this may not be the maximum possible dimension of an $n$-subspace in $ A^1$ (as is evident from the example in Section 5) unless $n =2k$ with $k$ odd (Theorem A) or $ K$ is finite (or more generally $C^1$~{\emph{\cite[Lemma 3]{GQ06}}} ). Moreover unless $K$ is finite it is not clear that we get an $n$-subspace of maximum dimension of $\alt_{n}(K)$ in this way. The question of the maximum dimension of an $ n$-subspace in $\alt_n(K)$ is closely related to other invariants for skew-forms including $d(K,n,1)$ and  $s_n(K)$ defined in  \cite{BGH1987} and \cite{GQ06} respectively. In particular, it is unknown to the authors if there is a $6$-subspace in $\alt_6(\mathbb{Q})$ of dimension four.
   \section*{Acknowledgements}
The second author gratefully acknowledges support from an NBHM research award. 

\end{document}